\documentclass[a4paper]{amsart}
\usepackage[left=3.18cm,right=3.18cm,top=2.54cm,bottom=2.54cm]{geometry}
\usepackage{amsthm,bm,amssymb}
\usepackage{color}
\usepackage{graphicx}
\usepackage{fancyhdr}
\usepackage{verbatim}
\usepackage{mathrsfs}
\usepackage{amsfonts,amscd,amssymb,epsfig,latexsym,amsmath,enumerate,stmaryrd}
\usepackage{mathtools}
\usepackage{verbatimbox}
\usepackage{tikz}
\usepackage{tikz-cd}
\usepackage{bbding}
\usepackage{dsfont}
\usepackage{slashed}
\usepackage{cleveref}
\usepackage{setspace}
\usepackage{url}


\usepackage{yfonts}

\usepackage{graphicx}
\usepackage[utf8]{inputenc}
\usepackage[export]{adjustbox}
\usepackage{wrapfig}

\graphicspath{ {images/} }
\usepackage{float}

\usetikzlibrary{matrix,arrows,decorations.pathmorphing}
\usepackage{enumerate}
\usepackage [english]{babel}
\usepackage [autostyle, english = american]{csquotes}

\theoremstyle{definition}
\newtheorem{theorem}{Theorem}[section]
\newtheorem{lemma}[theorem]{Lemma}
\newtheorem{proposition}[theorem]{Proposition}
\newtheorem{corollary}[theorem]{Corollary}

\newtheorem{remark}[theorem]{Remark}

\newcommand{\mubar}{{\overline{\mu}}}
\newcommand{\delbar}{{\overline{\partial}}}
\newcommand{\del}{\partial}

\DeclareMathOperator{\ad}{ad}

\title{On the algebra generated by $\mubar,\delbar,\del,\mu$}
\author{Shamuel Auyeung}
\address{Stony Brook University, Department of Mathematics, 100 Nicolls Road, 11794 Stony Brook}
\email{shamuel.auyeung@stonybrook.edu}
\author{Jin-Cheng Guu}
\address{Stony Brook University, Department of Mathematics, 100 Nicolls Road, 11794 Stony Brook}
\email{jin-cheng.guu@stonybrook.edu}
\author{Jiahao Hu}
\address{Stony Brook University, Department of Mathematics, 100 Nicolls Road, 11794 Stony Brook}
\email{jiahao.hu@stonybrook.edu}
\date{}
\begin{document}

\maketitle
\begin{abstract}
    In this note, we determine the structure of the associative algebra generated by the differential operators $\mubar,\delbar,\del,\mu$ that act on complex-valued differential forms of almost complex manifolds. This is done by showing it is the universal enveloping algebra of the graded Lie algebra generated by these operators and determining the structure of the corresponding graded Lie algebra. We then determine the cohomology of this graded Lie algebra with respect to its canonical inner differential $[d,-]$, as well as its cohomology with respect to all its inner differentials.
\end{abstract}

\section{Introduction}

The exterior differential $d$ on the bigraded de Rham algebra $\Omega_M^{\bullet,\bullet}$ of complex-valued differential forms on a complex manifold $M$ splits into anti-holomorphic and holomorphic parts $\delbar$ and $\del$ of bidegrees $(0,1)$ and $(1,0)$ respectively. Thus $\Omega_M^{\bullet,\bullet}$ can be viewed as a bigraded representation of the bigraded associative algebra
\[
A_{hol}=\frac{\text{Free algebra generated by }\delbar,\del}{(\delbar^2=\del^2=\delbar\del+\del\delbar=0)}
\]
which is the exterior algebra generated by $\delbar,\del$.

In \cite{Stelzig}, the bigraded representations of $A_{hol}$ are classified. This knowledge is then applied to study the geometry and topology of complex manifolds. By decomposing $\Omega_M^{\bullet,\bullet}$ into a direct sum of indecomposable representations, one obtains straightforward combinatorial descriptions of Dolbeault, Bott-Chern, and Aeppli cohomology groups. This decomposition is also useful for predicting the degeneracy of the Fr\"olicher spectral sequence \cite{PSU20} and for building more structured (rational) homotopy theoretical models for $M$ \cite{Stelzig2}.


Here we would like to carry out a similar analysis to develop tools for studying almost complex manifolds and compare the results to the complex situation. This is driven by the following long standing question: if a closed manifold admits an almost complex structure, does it necessarily admit a complex structure? In (real) dimension 2, the answer is that this always occurs. However, in dimension 4, there are examples of manifolds admitting almost complex structures that do not admit any complex structure. The question remains open in dimensions 6 and higher and in particular, it is not known whether $S^6$ admits a complex structure though it inherits an almost complex structure from the octonions.


A major difference between the complex case and the general almost complex case is that in the almost complex case, besides $\delbar$ and $\del$, the exterior differential $d$ has two extra components $\mubar$ and $\mu$ of bidegrees $(-1,2)$ and $(2,-1)$ respectively. Hence the following questions arise naturally:
\begin{enumerate}[(i)]
    \item What is the structure of the algebra $A$ generated by the operators $\mubar,\delbar,\del,\mu$?
    \item What are all indecomposable representations of $A$?
    \item What can we say about almost complex manifolds from understanding $A$ and its representations?
    \item How do we compare the almost complex case to the complex case?
\end{enumerate}

The aim of this note is to answer the first question and partially answer the last by comparing $A$ to $A_{hol}$. Our first observation is that $A$ is the universal enveloping algebra of a graded Lie algebra $\mathfrak{g}$ (\Cref{thm0}). Similarly $A_{hol}$ is the universal enveloping algebra of a graded Lie algebra $\mathfrak{g}_{hol}$, which is the abelian Lie algebra on two generators of degree one. So the questions concerning $A$ and $A_{hol}$ can all be reduced to their corresponding graded Lie algebras. 

It turns out $\mathfrak{g}$ is infinite dimensional in contrast to the finite dimensionality of $\mathfrak{g}_{hol}$. In fact, the Lie subalgebra of $\mathfrak{g}$ generated by $\delbar$ and $\del$ is free (\Cref{thm1}). This significantly increases the difficulty in finding all indecomposable representations of $A=U\mathfrak{g}$, the universal enveloping algebra of $\mathfrak{g}$. Thus the second question (and therefore the third question as well) appears to be extremely difficult to the authors at this point. We will comment in our concluding remarks (\Cref{sec4}) on how complicated the second question can be.

On the other hand, despite their enormous difference in dimensions, $\mathfrak{g}$ and $\mathfrak{g}_{hol}$ are equivalent in a weak sense. More precisely, the natural quotient map $\mathfrak{g}\to \mathfrak{g}_{hol}$ by modding out $\mubar$ and $\mu$ is a quasi-isomorphism (\Cref{quasiiso}), where $\mathfrak{g}$ and $\mathfrak{g}_{hol}$ are equipped with the natural differential $[d,-]$, i.e. the adjoint action by $d$. In fact, there is a $2$-dimensional family of differentials on $\mathfrak{g}$ and $\mathfrak{g}_{hol}$, parametrized by a Zariski open subset of the affine cone over the twisted cubic, so that the quotient map $\mathfrak{g}\to \mathfrak{g}_{hol}$ is a quasi-isomorphism. This will be discussed in \Cref{sec3} after the structure of $\mathfrak{g}$ is determined in \Cref{sec2}.

\subsection*{Acknowlegement}
This work is stimulated by conversations with Jonas Stelzig, to whom we owe our gratitude. We thank Aleksandar Milivojevi\'c for helpful discussions. All three authors were supported by the Simons Foundation International and their home institution of Stony Brook University in the academic year 2021-2022 and with the exception of the second author, also in 2022-2023. Moreover, the first author was also partially supported by NSF DMS-1547145 and NSF DMS-1901979.

\section{The Algebraic Structure of $\mubar,\delbar,\del,\mu$}\label{sec2}
Throughout we work over the ground field $\mathbb{C}$, though most of the results only require the ground field to be of characteristic zero. The algebra $A$ in question is the free bigraded associative algebra generated by the symbols $\mubar,\delbar,\del,\mu$ of bidegrees $(-1,2),(0,1),(1,0),(2,-1)$ respectively modulo the relations generated from the bigraded components of $d^2=(\mubar+\delbar+\del+\mu)^2=0$, which are
\begin{equation}
    \begin{aligned}
    &\mubar^2=0, & \mu^2=0, \\
    &\mubar\delbar+\delbar\mubar=0, & \mu\del+\del\mu=0,\\
    &\mubar\del+\del\mubar+\delbar^2=0, & \hspace{5mm} \mu\delbar+\delbar\mu+\del^2=0,\\
    &\mubar\mu+\mu\mubar+\delbar\del+\del\delbar=0.
\end{aligned}
\end{equation}

Even though $A$ carries a bigrading, the total grading is sufficient to us for most purposes of this note. So we regard $A$ as a graded associative algebra most of the time, and occasionally use its bigrading when necessary. We will use lower index for the total degree, for example $A_k$ is the (total) degree $k$ subspace of $A$.

To understand the structure of $A$, first of all we observe that the relations in $A$ are deduced from $d^2=0$, which is equivalent to $[d,d]=0$ since $[d,d]=2d^2$. Here $[-,-]$ is the standard graded commutator given by $[x,y]=xy-(-1)^{\deg x\cdot \deg y}yx$. So the above relations can be rewritten in terms of graded commutators as
\begin{equation}\label{relations}
    \begin{aligned}
    &[\mubar,\mubar]=0, & [\mu,\mu]=0, \\
    &[\mubar,\delbar]=0, & [\mu,\del]=0,\\
    &[\mubar,\del]+\frac{1}{2}[\delbar,\delbar]=0, & \hspace{5mm} [\mu,\delbar]+\frac{1}{2}[\del,\del]=0,\\
    &[\mubar,\mu]+[\delbar,\del]=0.
\end{aligned}
\end{equation}

This means, the operators $\mubar,\delbar,\del,\mu$ also generate a graded Lie algebra $\mathfrak{g}$, which is the quotient of the free graded Lie algebra generated by the symbols $\mubar,\delbar,\del,\mu$ modulo the relations \labelcref{relations}. And our first theorem is

\begin{theorem}\label{thm0}
The associative algebra $A$ generated by $\mubar,\delbar,\del,\mu$ is the universal enveloping algebra of the graded Lie algebra $\mathfrak{g}$ they generate.
\end{theorem}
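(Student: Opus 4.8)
The claim is that $A$, the quotient of the free bigraded associative algebra on $\mubar,\delbar,\del,\mu$ by the relations from $d^2=0$, coincides with $U\mathfrak{g}$ where $\mathfrak{g}\subseteq A$ is the graded Lie subalgebra generated by these four symbols (with bracket the graded commutator). The plan is to exhibit $A$ as a quotient of the free associative algebra by an ideal that is generated by elements of a free Lie algebra, so that $A$ is the universal enveloping algebra of the corresponding quotient Lie algebra; then identify that quotient Lie algebra with $\mathfrak{g}$.

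**Key steps.** First I would recall the general principle: if $L$ is a graded Lie algebra and $I\subseteq L$ is a graded Lie ideal, then $U(L/I) \cong UL / (I)$, where $(I)$ is the two-sided associative ideal of $UL$ generated by $I$. Apply this with $L = \mathrm{Free}_{\mathrm{Lie}}(\mubar,\delbar,\del,\mu)$, the free graded Lie algebra on the four generators (in the stated bidegrees), so that $U L = T$ is the free associative algebra on the four symbols. Second, I would observe that the defining relations of $A$ — the bigraded components of $(\mubar+\delbar+\del+\mu)^2 = 0$, i.e. $\mubar^2$, $\mubar\delbar+\delbar\mubar$, $\mubar\del+\del\mubar+\delbar^2$, and so on through $\mu^2$ — are, up to sign conventions for the graded commutator, precisely the graded-Lie expressions $\tfrac12[d,d]$ decomposed by bidegree; each generator of the relation ideal of $A$ lies in the image of $L$ inside $T$. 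Hence the associative ideal defining $A$ equals $(J)$ where $J\subseteq L$ is the graded Lie ideal generated by these bracket elements. Third, conclude $A = T/(J) = UL/(J) \cong U(L/J)$. Finally, identify $L/J$ with $\mathfrak{g}$: the composite $L \to T \to A$ has image the Lie subalgebra generated by the four symbols, which is by definition $\mathfrak{g}$, and its kernel is exactly $J$ (the relations among the generators as elements of a Lie algebra are precisely those forced by $d^2=0$), so $L/J \cong \mathfrak{g}$ and therefore $A \cong U\mathfrak{g}$.

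**The main obstacle.** The delicate point is the third/fourth step: showing that the kernel of $L \to A$ is exactly the Lie ideal $J$ generated by the $d^2=0$ components, equivalently that no "extra" Lie relations are imposed by passing through the associative algebra. Phrased dually, one must check that the natural map $U(L/J) \to A$ is not just surjective but injective. The clean way to handle this is to run the argument in the other direction using the Poincaré–Birkhoff–Witt theorem: note $\mathfrak{g}$ is a graded Lie algebra, form $U\mathfrak{g}$, and build the comparison map $U\mathfrak{g}\to A$ out of the inclusion $\mathfrak{g}\hookrightarrow A$ (which exists since $A$ is associative, hence a Lie algebra under commutator, and $\mathfrak{g}$ is a sub-Lie-algebra); this map is surjective since the four generators generate $A$. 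For injectivity, exhibit a filtration or use that both sides are presented by the same generators and relations — concretely, verify that the relations of $A$ all come from $\tfrac12[d,d]=0$ and its brackets, so $A$ admits a presentation as $T/(J)$ with $J$ a Lie ideal, whence by the lemma above $A\cong U(L/J)$ and the identification $L/J = \mathfrak{g}$ makes the comparison map an isomorphism. I expect the author to invoke PBW (or a basis/normal-form argument for the free associative algebra modulo a Lie ideal) to nail down injectivity; the rest is bookkeeping with the bigraded components of $d^2=0$.
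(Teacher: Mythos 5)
Your argument is correct, and it reaches the conclusion by a genuinely different route than the paper. You present $A$ as $T/(J)$ with $T$ the free associative algebra, observe that the generators of the relation ideal are Lie elements (graded brackets: since all four generators have odd total degree, $xy+yx=[x,y]$), and invoke the compatibility $U(L/J)\cong (UL)/(J)$ for a Lie ideal $J$ of the free graded Lie algebra $L$ --- a standard fact that follows formally from $U$ being a left adjoint and hence preserving quotients. Combined with PBW, which embeds $L/J$ into $U(L/J)=A$ and identifies it with the Lie subalgebra $\mathfrak{g}$ generated by $\mubar,\delbar,\del,\mu$, this settles both the isomorphism $A\cong U\mathfrak{g}$ and the ``no extra Lie relations'' worry you raise at the end: the kernel of $L\to A$ is forced to be exactly $J$ once the quotient lemma and PBW are in place. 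The paper instead equips the free algebra with its connected cocommutative Hopf structure in which the four generators are primitive, checks that the coproduct descends to $A$ precisely because the relations are Lie elements, and applies the Milnor--Moore theorem to conclude $A=U(PA)$; a separate factorization-plus-PBW argument then identifies the primitive Lie algebra $PA$ with the quotient of the free Lie algebra $\mathfrak{f}$ by the relations. Your route is more elementary (no Hopf algebras or Milnor--Moore needed), while the paper's buys the additional structural fact that $\mathfrak{g}$ is exactly the space of primitives of $A$ for its natural coproduct. Both approaches ultimately rest on the same two observations --- the relations are Lie elements, and PBW supplies the required injectivity --- so either is a complete proof.
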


We shall prove a more general fact that if all relations in an associative algebra arise from Lie brackets of generators then it is the universal enveloping algebra of a Lie algebra. From this general fact, the above theorem follows easily as a special case.

To set up the stage, let $T_S$ be the graded tensor algebra over a field (of arbitrary characteristic) generated by a set $S$ of homogeneous elements in positive degrees and let $I_R$ be the two-sided ideal of $T_S$ generated by a set of positive degree homogeneous elements $R\subset T_S$. Denote by $L_S$ the free graded Lie algebra generated by $S$. It is well-known that $T_S$ is the universal enveloping algebra of $L_S$, and that $L_S$ is a sub-Lie-algebra of $T_S$ where $T_S$ is equipped with the standard graded commutator.

\begin{proposition}
Suppose $R\subset L_S$. Then $T_S/I_R$ is the universal enveloping algebra of $L_S/J_R$ where $J_R$ is the Lie ideal of $L_S$ generated by $R$.
\end{proposition}
\begin{proof}
Consider the graded Lie algebra homomorphism $L_S\hookrightarrow T_S\to T_S/I_R$ (since $I_R$ is a two-sided ideal, it is also a Lie ideal). Since $R\subset L_S$ is mapped to zero in $T_S/I_R$, we have a commutative diagram of graded Lie algebras
\[\begin{tikzcd}
	L_S \ar[r]\ar[d]& T_S\ar[d] \\
	L_S/J_R \ar[r]& T_S/I_R
\end{tikzcd}\]
This in turn yields, by the universal property of universal enveloping algebra, a commutative diagram of associative algebras
\[
\begin{tikzcd}
	UL_S\ar[r,equal]\ar[d] & T_S\ar[d] \\
	U(L_S/J_R)\ar[r,"\phi"] & T_S/I_R
\end{tikzcd}
\]
On the other hand, consider the algebra map $T_S=UL_S\to U(L_S/J_R)$ which takes $R\subset T_S$ to zero. Therefore we have a map $\psi: T_S/I_R\to U(L_S/J_R)$ and the following diagram commutes:
\[
\begin{tikzcd}
	UL_S\ar[r,equal]\ar[d] & T_S\ar[d] \\
	U(L_S/J_R)\ar[r,shift left=1, "\phi"] & T_S/I_R\ar[l,shift left=1, "\psi"]
\end{tikzcd}
\]
Notice that both vertical maps are surjective: the right vertical map is surjective by construction and the left vertical map is surjective due to Poincar\'e-Birkhoff-Witt theorem. To see the latter, one can choose an ordered basis of $L_S/J_R$, lift it to $L_S$ and extend it to an ordered basis of $L_S$. Now the commutatively of the diagram and the surjectivity of the vertical maps force $\phi$ and $\psi$ to be inverses.
\end{proof}

Now that we know $A$ is the universal enveloping algebra of $\mathfrak{g}$, let us turn to study the structure of $\mathfrak{g}$.

\begin{lemma}
 The Lie subalgebra $\mathfrak{h} \subset \mathfrak{g}$ generated by $\del,\delbar$ is a Lie ideal.
\end{lemma}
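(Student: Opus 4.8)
The plan is to show that the normalizer $N(\mathfrak{h})=\{x\in\mathfrak{g}:[x,\mathfrak{h}]\subseteq\mathfrak{h}\}$ is all of $\mathfrak{g}$. Since $N(\mathfrak{h})$ is itself a Lie subalgebra of $\mathfrak{g}$ (if $[x_i,\mathfrak{h}]\subseteq\mathfrak{h}$ for $i=1,2$, then $[[x_1,x_2],y]=[x_1,[x_2,y]]-(-1)^{\deg x_1\deg x_2}[x_2,[x_1,y]]\in\mathfrak{h}$ for every $y\in\mathfrak{h}$), and since $\mathfrak{g}$ is generated as a Lie algebra by $\mubar,\delbar,\del,\mu$, it suffices to check that each of these four generators normalizes $\mathfrak{h}$. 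For $\del$ and $\delbar$ this is immediate because $\mathfrak{h}$ is by definition closed under the bracket.

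It remains to handle $\mubar$ and $\mu$. Here I would reduce once more: the set $S_{\mubar}=\{y\in\mathfrak{h}:[\mubar,y]\in\mathfrak{h}\}$ is a Lie subalgebra of $\mathfrak{h}$, since for $y_1,y_2\in S_{\mubar}$ the graded Jacobi identity gives $[\mubar,[y_1,y_2]]=[[\mubar,y_1],y_2]+(-1)^{\deg\mubar\deg y_1}[y_1,[\mubar,y_2]]$, and both summands lie in $\mathfrak{h}$ because $[\mubar,y_i]\in\mathfrak{h}$, $y_i\in\mathfrak{h}$, and $\mathfrak{h}$ is closed under the bracket. As $\mathfrak{h}$ is generated by $\del,\delbar$, it is enough to verify $\del,\delbar\in S_{\mubar}$, i.e.\ $[\mubar,\delbar]\in\mathfrak{h}$ and $[\mubar,\del]\in\mathfrak{h}$; the analogous statement for $\mu$ is symmetric.

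Both facts follow at once from the defining relations \labelcref{relations}, which hold in $\mathfrak{g}$ by \Cref{thm0}: one has $[\mubar,\delbar]=0\in\mathfrak{h}$ and $[\mubar,\del]=-\tfrac12[\delbar,\delbar]\in\mathfrak{h}$ since $\delbar\in\mathfrak{h}$, and symmetrically $[\mu,\del]=0\in\mathfrak{h}$ and $[\mu,\delbar]=-\tfrac12[\del,\del]\in\mathfrak{h}$. Hence $\mubar,\mu\in N(\mathfrak{h})$, so $N(\mathfrak{h})=\mathfrak{g}$ and $\mathfrak{h}$ is an ideal.

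There is no genuine obstacle in this argument; the whole point is that among the relations \labelcref{relations} the two quadratic ones, $[\mubar,\del]+\tfrac12[\delbar,\delbar]=0$ and $[\mu,\delbar]+\tfrac12[\del,\del]=0$, express the ``mixed'' brackets of $\mubar,\mu$ against $\del,\delbar$ purely in terms of brackets internal to $\mathfrak{h}$, while the remaining relations pairing $\mubar$ or $\mu$ with one of $\del,\delbar$ vanish outright. The only bookkeeping to keep in mind is that we are working in the quotient $\mathfrak{g}=\mathfrak{f}/\ker\varphi$, which is harmless precisely because the argument uses nothing about $\mathfrak{g}$ beyond the relations \labelcref{relations} and the fact that $\mathfrak{h}$ is a subalgebra.
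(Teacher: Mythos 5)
Your proof is correct and follows essentially the same route as the paper's: reduce to checking the four generators of $\mathfrak{g}$ against the two generators of $\mathfrak{h}$, then invoke the relations \labelcref{relations}. The only difference is that you spell out the two reduction steps (the normalizer being a subalgebra, and the set $S_{\mubar}$ being a subalgebra) which the paper leaves implicit.
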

\begin{proof}
Since $\mathfrak{g}$ is generated by $\mubar,\delbar,\del,\mu$ and $[\delbar,-],[\del,-]$ preserve $\mathfrak{h}$, it suffices to show $[\mubar,-],[\mu,-]$ preserve $\mathfrak{h}$. Further since $\mathfrak{h}$ is generated by $\delbar,\del$, it suffices to check that $[\mubar,\delbar]$, $[\mubar,\del]$, $[\mu,\delbar]$, $[\mu,\del]$ are contained in $\mathfrak{h}$, which obviously follows from \labelcref{relations}.
\end{proof}

\begin{corollary}\label{cor1}
$\mathfrak{g}=\mathfrak{h}$ in degrees $\ge 2$, and $\mathfrak{g}/\mathfrak{h}$ is isomorphic to the free graded abelian Lie algebra on two (equivalent classes of) generators $\mubar$ and $\mu$.
\end{corollary}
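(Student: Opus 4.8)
The plan is to exploit that $\mathfrak{g}$ is generated by the four operators $\mubar,\delbar,\del,\mu$, all of total degree $1$, together with the description of $\mathfrak{g}$ obtained in the proof of \Cref{thm0}: it is the quotient of the free graded Lie algebra $\mathfrak{f}$ on these generators by the ideal generated by the relations \labelcref{relations}, which are homogeneous of degree $2$.

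First I would observe that, being generated in degree $1$, the Lie algebra $\mathfrak{g}$ is concentrated in positive degrees and satisfies $\mathfrak{g}_{k}=[\mathfrak{g}_1,\mathfrak{g}_{k-1}]$ for all $k\ge 2$ (expand an iterated bracket into left-normed ones via Jacobi). Here $\mathfrak{g}_1$ is spanned by $\mubar,\delbar,\del,\mu$, and in fact these four are linearly independent: the defining ideal of $\mathfrak{g}$ inside $\mathfrak{f}$ lies in degrees $\ge 2$, so $\mathfrak{g}_1=\mathfrak{f}_1$ is $4$-dimensional. Similarly $\mathfrak{h}_1=\langle\delbar,\del\rangle$ is $2$-dimensional.

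Next, the degree-$2$ case is handled by inspection of \labelcref{relations}: among the ten brackets of pairs of generators, \labelcref{relations} kills $[\mubar,\mubar],[\mu,\mu],[\mubar,\delbar],[\mu,\del]$; the brackets $[\delbar,\delbar],[\delbar,\del],[\del,\del]$ lie in $\mathfrak{h}$ by definition; and the remaining three, $[\mubar,\del]=-\tfrac12[\delbar,\delbar]$, $[\mu,\delbar]=-\tfrac12[\del,\del]$, $[\mubar,\mu]=-[\delbar,\del]$, lie in $\mathfrak{h}$ as well. Hence $\mathfrak{g}_2\subseteq\mathfrak{h}$. An induction then finishes the first claim: if $\mathfrak{g}_k\subseteq\mathfrak{h}$ with $k\ge 2$, then $\mathfrak{g}_{k+1}=[\mathfrak{g}_1,\mathfrak{g}_k]\subseteq[\mathfrak{g},\mathfrak{h}]\subseteq\mathfrak{h}$, using that $\mathfrak{h}$ is a Lie ideal (the preceding lemma). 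Thus $\mathfrak{g}_k=\mathfrak{h}_k$ for all $k\ge 2$.

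Finally, for the quotient: by the above $\mathfrak{g}/\mathfrak{h}$ vanishes outside degree $1$, where it equals $\mathfrak{g}_1/\mathfrak{h}_1$, a $2$-dimensional space with basis the classes of $\mubar$ and $\mu$; since it is concentrated in a single degree, its bracket necessarily lands in degree $2$ and so vanishes, whence $\mathfrak{g}/\mathfrak{h}$ is abelian — that is, it is the free graded abelian Lie algebra on the two odd generators $\mubar,\mu$. I expect no genuine obstacle; the one point requiring care is the linear independence of $\mubar,\delbar,\del,\mu$ in $\mathfrak{g}$ and of $\delbar,\del$ in $\mathfrak{h}$, which is why I would deduce it from the degree-$2$ homogeneity of \labelcref{relations} rather than argue it by hand from the representation on differential forms.
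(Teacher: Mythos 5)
Your proof is correct and follows essentially the same route as the paper: the degree-$2$ case by direct inspection of the relations \labelcref{relations}, then induction via $\mathfrak{g}_{k+1}=[\mathfrak{g}_1,\mathfrak{g}_k]$ together with the lemma that $\mathfrak{h}$ is an ideal. The extra care you take with the linear independence of the generators and with why the quotient is abelian (its bracket would land in degree $2$, where the quotient vanishes) just fills in details the paper leaves to the reader.
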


Recall that we use lower index for the total degree, for example $\mathfrak{g}_k$ and $\mathfrak{h}_k$ are the total degree $k$ subspace of $\mathfrak{g}$ and $\mathfrak{h}$ respectively.

\begin{proof}
From $$[\mubar,\mubar]=0,[\mu,\mu]=0,[\mubar,\mu]=-[\delbar,\del]\in\mathfrak{h},$$
we see $\mathfrak{g}_2=\mathfrak{h}_2 = \text{span}\{[\del,\del],[\delbar,\delbar],[\del,\delbar]\}$. Then inductively using the above lemma and that 
\begin{align*}
    \mathfrak{h}_{k+1}\subset\mathfrak{g}_{k+1}&=[\mubar,\mathfrak{g}_k]+[\delbar,\mathfrak{g}_k]+[\del,\mathfrak{g}_k]+[\mu,\mathfrak{g}_k]\\
    &=[\mubar,\mathfrak{h}_k]+[\delbar,\mathfrak{h}_k]+[\del,\mathfrak{h}_k]+[\mu,\mathfrak{h}_k]\subset\mathfrak{h}_{k+1},
\end{align*}
we have $\mathfrak{g}=\mathfrak{h}$ in degrees $\ge 2$. The second assertion follows easily.
\end{proof}
\begin{theorem}\label{thm1}
The Lie subalgebra $\mathfrak{h}$ of $\mathfrak{g}$ generated by $\delbar,\del$ is a free graded Lie algebra.
\end{theorem}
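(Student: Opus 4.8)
The plan is to exhibit $\mathfrak{h}$ as a free graded Lie algebra by identifying an explicit free generating set, or equivalently by showing that its universal enveloping algebra is a free associative algebra. I would work on the associative side, since by \Cref{thm0} we have $A=U\mathfrak{g}$ and $U\mathfrak{h}$ is the associative subalgebra of $A$ generated by $\delbar,\del$. A graded Lie algebra is free if and only if its universal enveloping algebra is a free associative algebra (on any set of homogeneous algebra generators that descend to a basis of the indecomposables); so it suffices to show that the subalgebra $B\subset A$ generated by $\delbar,\del$ is free associative.

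The key step is to produce a normal form for elements of $B$. Using the relations \labelcref{relations}, every element of $A$ can be rewritten by pushing all occurrences of $\mubar$ and $\mu$ to one side: the relations $[\mubar,\delbar]=0$, $[\mu,\del]=0$, $[\mubar,\del]=-\frac12[\delbar,\delbar]$, $[\mu,\delbar]=-\frac12[\del,\del]$, $[\mubar,\mu]=-[\delbar,\del]$ all express a commutator of a $\mu$-type generator with a $\delta$-type generator (or with each other) in terms of lower "$\mu$-weight" expressions. This should give a PBW-type decomposition $A\cong U\mathfrak{h}\otimes U(\mathfrak{g}/\mathfrak{h})$ as graded vector spaces (indeed as $U\mathfrak{h}$-modules), compatibly with \Cref{cor1}. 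Concretely, I would set up a filtration of $A$ by the number of $\mubar,\mu$ letters and argue that the associated graded identifies $B=U\mathfrak{h}$ with the "constant" part. The cleanest route: show directly that the multiplication map $B\langle \mubar,\mu\rangle \to A$ (where $B\langle\mubar,\mu\rangle$ denotes words in $\mubar,\mu$ with coefficients in $B$, arranged in a fixed order) is bijective; surjectivity is the rewriting above, and injectivity follows from PBW for $A=U\mathfrak{g}$ together with $\mathfrak{h}$ being an ideal with abelian free quotient (\Cref{cor1}). Once $B$ is a retract/tensor factor of $A$ in the appropriate sense, one computes the Hilbert series of $B$ from that of $A$ (both of which are governed by the relations \labelcref{relations}, which are quadratic and "complete" in a suitable sense), and checks it matches the Hilbert series of a free associative algebra on the expected generators; combined with the fact that $\delbar,\del$ generate $B$ with no relations forced in low degree, this forces freeness. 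Alternatively, and perhaps more robustly, I would show the two-sided ideal of relations in $A$ has a Gröbner basis (under a degree-compatible monomial order) none of whose leading terms lie in the subalgebra on $\delbar,\del$ — then the standard monomials in $\delbar,\del$ are linearly independent in $A$, i.e. $B$ is free.

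I expect the main obstacle to be the injectivity/linear-independence step: proving that \emph{no} nontrivial relation among $\delbar,\del$ is created inside $A$ by the interaction with $\mubar,\mu$. Naively the relations $[\mubar,\del]=-\frac12[\delbar,\delbar]$ etc. do relate brackets of $\delta$'s to brackets involving $\mu$'s, so one must be careful that eliminating $\mubar,\mu$ does not secretly impose an identity purely among $\delbar,\del$. The Hopf-algebraic/PBW framework from \Cref{thm0} and the ideal structure of \Cref{cor1} are exactly what rule this out: since $\mathfrak{h}$ is a Lie ideal and $\mathfrak{g}/\mathfrak{h}$ is free abelian on $\mubar,\mu$, PBW gives $A\cong U\mathfrak{h}\otimes \mathbb{C}[\mubar,\mu]$ as a left $U\mathfrak{h}$-module, so $U\mathfrak{h}\hookrightarrow A$, and it remains only to see that $U\mathfrak{h}$ itself is free associative — which I would extract by computing $\dim \mathfrak{h}_k$ (equivalently the generating function of $U\mathfrak{h}$) from the known presentation and recognizing it as that of a free Lie algebra, e.g. via the graded Witt/necklace formula applied to the generating series of the space of Lie generators.
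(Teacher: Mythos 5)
Your strategy---prove that the associative subalgebra $B\subset A$ generated by $\delbar,\del$ is a free tensor algebra and then recover freeness of $\mathfrak{h}$ as its primitives---is sound in outline and genuinely different from the paper's, and the Gr\"obner-basis/diamond-lemma route you sketch at the end would in fact work: ordering the letters so that the leading terms of the seven defining relations are $\mubar\mubar$, $\mu\mu$, $\mubar\delbar$, $\mubar\del$, $\mu\delbar$, $\mu\del$, $\mubar\mu$, the candidate standard monomials are exactly $w$, $w\mu$, $w\mubar$, $w\mu\mubar$ with $w$ a word in $\delbar,\del$, which reproduces the Poincar\'e series $(1+q)^2/(1-2q)$ and in particular the linear independence of all words in $\delbar,\del$. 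But as written the argument has a gap at exactly the point you yourself flag as the main obstacle: you never verify confluence, i.e.\ that all overlap ambiguities ($\mubar\mubar\mu$, $\mubar\mu\delbar$, $\mu\mu\del$, and so on) resolve. That finite check \emph{is} the theorem; without it neither the normal form nor the Hilbert series of $A$ is established, since quadratic relations alone do not determine the Hilbert series of the quotient (your phrase ``complete in a suitable sense'' is precisely what must be proved). The other branches of your argument do not substitute for it: the PBW decomposition $A\cong U\mathfrak{h}\otimes\Lambda(\mubar,\mu)$ (exterior, not polynomial, since $\mubar,\mu$ are odd) holds for purely formal reasons whatever $\mathfrak{h}$ is, so it gives the embedding $U\mathfrak{h}\hookrightarrow A$ and the module basis $1,\mubar,\mu,\mubar\mu$, but says nothing about the internal structure of $U\mathfrak{h}$; and ``computing $\dim\mathfrak{h}_k$ from the known presentation'' presupposes the very normal form in question.

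The paper sidesteps the associative computation by constructing an explicit target: it takes the free graded Lie algebra $\mathfrak{h}'$ on $\delbar,\del$, defines square-zero derivations $D_\mubar$, $D_\mu$ realizing the relations \labelcref{relations} (e.g.\ $D_\mubar\del=-\tfrac12[\delbar,\delbar]$), and forms the iterated semidirect product $\mathfrak{g}'=(\mathfrak{h}'\oplus\mathbb{C}D_\mubar)\oplus\mathbb{C}D_\mu$. Since the relations hold in $\mathfrak{g}'$, there is a surjection $\mathfrak{g}\to\mathfrak{g}'$ carrying $\mathfrak{h}$ onto $\mathfrak{h}'$; as $\mathfrak{h}$ is also a quotient of $\mathfrak{h}'$, the graded dimensions are squeezed and $\mathfrak{h}\cong\mathfrak{h}'$ is free. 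The verification there (that the derivations are well defined, square to zero, and satisfy the remaining relations) plays the role of your overlap check but is a short computation on generators rather than over all degree-three ambiguities. To complete your version, either carry out the diamond-lemma verification explicitly, or---equivalently and more efficiently---exhibit a surjection from $\mathfrak{h}$ onto a free Lie algebra, which is what the paper's semidirect-product model provides.
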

\begin{proof}
Let $\mathfrak{h}'$ be the free graded Lie algebra generated on symbols $\delbar,\del$. We define a derivation $D_\mubar$ on $\mathfrak{h}'$ by first setting
\[
    D_\mubar\delbar=0, D_\mubar\del=-\frac{1}{2}[\delbar,\delbar]
\]
and then extending it to a derivation. It is easy to see that $[D_\mubar,D_\mubar]=2 D_\mubar^2=0$ on $\delbar$ and $\del$. Since $[D_\mubar,D_\mubar]$ is a derivation on $\mathfrak{h}'$ that vanishes on the generators, $[D_\mubar,D_\mubar]$ must vanish identically on $\mathfrak{h}'$.

Define $\mathfrak{g}''$ to be the semi-direct product $\mathfrak{h}'\oplus \mathbb{C} D_\mubar$ with Lie bracket inherited from that of $\mathfrak{h}'$, the action of $D_\mubar$ on $\mathfrak{h}'$ and $[D_\mubar,D_\mubar]=0$. More precisely, for homogeneous $x,y\in \mathfrak{h}'$ and $s,t\in \mathbb{C}$ we define
\[
[x+s D_\mubar,y+t D_\mubar]:=[x,y]+s \cdot (D_\mubar y)+(-1)^{\deg x}t\cdot (D_\mubar x).
\]
This Lie bracket indeed satisfies the Jacobi identity since $D_\mubar$ is a derivation on $\mathfrak{h}'$. We can similarly define a derivation $D_\mu$ on $\mathfrak{g}''$ by
\[
D_\mu \delbar=-\frac{1}{2}[\del,\del], D_\mu \del=0, D_\mu (D_\mubar)=-[\del,\delbar].
\]
Since $D_\mu^2$ vanishes on $\delbar$ and $\del$, and
\[
D_\mu^2(D_\mubar)=-D_\mu[\del,\delbar]=-[D_\mu \del,\delbar]+[\del,D_\mu \delbar]=-\frac{1}{2}[\del,[\del,\del]]=0.
\]
We see $[D_\mu,D_\mu]$ vanished identically on $\mathfrak{g}''$. Define $\mathfrak{g}'$ to be the semi-direct product $\mathfrak{g}''\oplus \mathbb{C}D_\mu$.

Now we show $\mathfrak{g}'$ is isomorphic to $\mathfrak{g}$. First, consider the map from the free graded Lie algebra generated by the symbols $\mubar,\delbar,\del,\mu$ onto $\mathfrak{g}'$ by taking $\mubar,\delbar,\del,\mu$ to $D_\mubar,\delbar,\del,D_\mu$ respectively. This map descends to an epimorphism $\mathfrak{g}\to\mathfrak{g}'$ since, by the construction of $\mathfrak{g}'$, all the relations in \labelcref{relations} are mapped to zero in $\mathfrak{g}'$. Moreover, this map $\mathfrak{g}\to\mathfrak{g}'$ by construction takes $\mathfrak{h}$ onto $\mathfrak{h}'$. We claim the map $\mathfrak{h}\to\mathfrak{h}'$ is an isomorphism. Indeed since $\mathfrak{h}'$ is free, we have a unique graded Lie algebra homomorphism $\mathfrak{h}'\to \mathfrak{h}$ defined by sending the symbols $\delbar,\del$ in $\mathfrak{h}'$ to $\delbar,\del$ in $\mathfrak{h}$ respectively. This map is inverse to the previous homomorphism $\mathfrak{h}\to \mathfrak{h}'$ since the two compositions of these two morphisms are identities on the corresponding generators. Consequently the map $\mathfrak{g}\to\mathfrak{g}'$ yields $\mathfrak{h}\cong\mathfrak{h}'$. Combining this with the above corollary, we conclude $\mathfrak{g}\cong\mathfrak{g}'$.
\end{proof}

\begin{remark}
The Lie subalgebra of $\mathfrak{g}$ generated by $\mubar,\mu$ is isomorphic to the Heisenberg (graded) Lie algebra. That is, it is the three dimensional graded Lie algebra spanned by $\mubar,\mu,[\mubar,\mu]$ whose center is spanned by $[\mubar,\mu]$ and in which $[\mubar,\mubar]=[\mu,\mu]=0$.
\end{remark}

\begin{corollary}
$U\mathfrak{g}$ is a free (left and right) module over $U\mathfrak{h}$ with basis $1,\mubar,\mu, \mubar\mu$.
\end{corollary}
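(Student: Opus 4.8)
The plan is to deduce this directly from the Poincaré--Birkhoff--Witt theorem (in its graded/super form) applied to the pair $\mathfrak{h}\subset\mathfrak{g}$, using \Cref{cor1} to pin down a complement. First I would fix a homogeneous ordered basis $B$ of $\mathfrak{h}$. By \Cref{cor1}, $\mathfrak{g}=\mathfrak{h}\oplus\mathbb{C}\mubar\oplus\mathbb{C}\mu$ as graded vector spaces: indeed $\mathfrak{g}$ coincides with $\mathfrak{h}$ in all degrees $\neq 1$, while in degree $1$ we have $\mathfrak{g}_1=\mathrm{span}\{\mubar,\delbar,\del,\mu\}$ (the relations \labelcref{relations} live in degrees $\ge 2$) and $\mathfrak{h}_1=\mathrm{span}\{\delbar,\del\}$, so $\mubar,\mu$ themselves — not merely lifts — span a complement. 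Hence $B\cup\{\mubar,\mu\}$ is a homogeneous basis of $\mathfrak{g}$, with $\mubar$ and $\mu$ of odd total degree $1$.

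Next I would equip $B\cup\{\mubar,\mu\}$ with a total order in which every element of $B$ precedes $\mubar$, and $\mubar$ precedes $\mu$. By the graded PBW theorem, the ordered monomials in this basis (odd basis vectors occurring at most once; in particular $\mubar$ and $\mu$ appear at most once each) form a $\mathbb{C}$-basis of $U\mathfrak{g}$. Because of the chosen order, each such monomial factors uniquely as $m\cdot w$ with $m$ an ordered monomial in $B$ and $w\in\{1,\mubar,\mu,\mubar\mu\}$; and the monomials $m$ are exactly a PBW basis of the subalgebra $U\mathfrak{h}\subseteq U\mathfrak{g}$ (the inclusion $U\mathfrak{h}\hookrightarrow U\mathfrak{g}$ being itself a consequence of PBW). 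Collecting terms gives $U\mathfrak{g}=\bigoplus_{w}U\mathfrak{h}\cdot w$, i.e. $U\mathfrak{g}$ is free as a left $U\mathfrak{h}$-module with basis $1,\mubar,\mu,\mubar\mu$. Re-running the argument with the opposite order — every element of $B$ following $\mu$, which follows $\mubar$ — the monomials factor as $w\cdot m$ and yield $U\mathfrak{g}=\bigoplus_{w}w\cdot U\mathfrak{h}$, the right-module statement.

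I do not expect a genuine obstacle here; this is essentially bookkeeping with PBW and \Cref{cor1}. The only point deserving care is the use of the \emph{graded} PBW theorem rather than the classical one: for the odd generators one has $\mubar^2=\tfrac12[\mubar,\mubar]=0$ and $\mu^2=\tfrac12[\mu,\mu]=0$ in $U\mathfrak{g}$ by \labelcref{relations}, which is precisely why the complementary factor is the four-element set $\{1,\mubar,\mu,\mubar\mu\}$ rather than an infinite family of powers. It may also be worth recording the conceptual repackaging: the decomposition is $U\mathfrak{g}\cong U\mathfrak{h}\otimes U(\mathfrak{g}/\mathfrak{h})$ as a left $U\mathfrak{h}$-module, where $U(\mathfrak{g}/\mathfrak{h})$ is the exterior algebra on the two odd classes $\mubar,\mu$ by \Cref{cor1}.
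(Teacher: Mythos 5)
Your proof is correct and follows essentially the same route as the paper: both rest on the graded Poincar\'e--Birkhoff--Witt theorem, with $\{1,\mubar,\mu,\mubar\mu\}$ arising as a basis of $U(\mathfrak{g}/\mathfrak{h})$, the exterior algebra on the two odd classes $\mubar,\mu$. The paper cites PBW for freeness and identifies the basis by computing the quotient of $U\mathfrak{g}$ by the ideal generated by $\mathfrak{h}$, whereas you make the ordered-monomial bookkeeping explicit; the content is the same.
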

\begin{proof}
That $U\mathfrak{g}$ is a free module over $U\mathfrak{h}$ follows from the Poincar\'e-Birkhoff-Witt theorem (see \cite[pp. 288]{FHT}). To prove the second assertion, note $\mathfrak{h}$ being a $\mathfrak{g}$-ideal implies that $U\mathfrak{h}$ is an (two-sided) $U\mathfrak{g}$-ideal. The quotient $U\mathfrak{g}/U\mathfrak{h}$ is the free associative algebra generated by $\mubar,\delbar,\del,\mu$ modulo the relations $\delbar=0,\del=0,[\mubar,\mu]=0$. This quotient algebra has a basis $1,\mubar,\mu,\mubar\mu$, thus proving $1,\mubar,\mu,\mubar\mu$ is a $U\mathfrak{h}$-basis for $U\mathfrak{g}$.
\end{proof}

The above corollary gives a basis for $A=U\mathfrak{g}$ as follows. Since $U\mathfrak{h}$ is the free tensor algebra on $\delbar,\del$, it has a canonical basis given by simple tensors with factors $\delbar$ and $\del$. Then a basis for $U\mathfrak{g}$ can be obtained from multiplying the basis for $U\mathfrak{h}$ by $1,\mubar,\mu,\mubar\mu$ from either left or right. The Poincar\'e series of $A$, i.e. $\sum_k (\dim A_k) \, q^k$, is $(1+q)^2/(1-2q)$.

\section{Cohomology and Maurer-Cartan Elements}\label{sec3}

Since $\mathfrak{g}$ carries a natural differential $\ad_d=[d,-]$, we would like to determine its cohomology with respect to $\ad_d$ which amounts to finding elements that commute with $d$ (are $\ad_d$-closed) but are not themselves commutators with $d$ ($\ad_d$-exact). To begin with, consider the free graded abelian Lie algebra $\mathfrak{g}_{hol}$ generated by $\del,\delbar$, and equip it with the inner differential $[\delbar+\del,-]$ (which is the zero differential since $\mathfrak{g}_{hol}$ is abelian). This Lie algebra is closely related to complex manifolds. The natural quotient map
\[
f:\mathfrak{g}\to \mathfrak{g}_{hol}
\]
obtained by modding out $\mubar,\mu$ is a morphism of differential graded Lie algebras. An almost complex manifold is integrable if and only if the canonical action of $\mathfrak{g}$ on its complex-valued differential forms descends, via $f$, to an action of $\mathfrak{g}_{hol}$. In the previous section, we see that $\mathfrak{g}$ is quite large since it contains a free Lie subalgebra while $\mathfrak{g}_{hol}$ is much smaller. 

Recall the Lie subalgebra $\mathfrak{h}$ of $\mathfrak{g}$ generated by $\del,\delbar$ is free, which is isomorphic to the homotopy Lie algebra of $\mathbb{C}\mathbb{P}^1\vee \mathbb{C}\mathbb{P}^1$ tensored with $\mathbb{C}$. Here by homotopy Lie algebra of a (pointed) topological space $X$ we mean $\pi_*(\Omega X)=\bigoplus_{n\ge 0}\pi_n(\Omega X)$ equipped with Whitehead bracket where $\Omega X$ is the based loop space of $X$. Meanwhile, the Lie subalgebra of $\mathfrak{g}_{hol}$ generated by $\del,\delbar$ is $\mathfrak{g}_{hol}$ itself and is isomorphic to the homotopy Lie algebra of $\mathbb{C}\mathbb{P}^\infty\times \mathbb{C}\mathbb{P}^\infty$ tensored with $\mathbb{C}$. From this point of view, the quotient map $f$ corresponds to the inclusion of $\mathbb{C}\mathbb{P}^1\vee \mathbb{C}\mathbb{P}^1$ into $\mathbb{C}\mathbb{P}^\infty\times \mathbb{C}\mathbb{P}^\infty$ as the 2-skeleton. It appears there is an enormous gap between $\mathfrak{g}$ and $\mathfrak{g}_{hol}$ corresponding to higher dimensional cells of $\mathbb{C}\mathbb{P}^\infty\times \mathbb{C}\mathbb{P}^\infty$.

However, we have the following surprising theorem:

\begin{theorem} \label{quasiiso}
The natural quotient morphism $f: (\mathfrak{g},\ad_d)\to(\mathfrak{g}_{hol},\ad_{\delbar+\del}\equiv 0)$ is a quasi-isomorphism of differential graded Lie algebras.
\end{theorem}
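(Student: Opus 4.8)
The plan is to exploit that $\mathfrak g_{hol}$ is abelian, so its differential $[\delbar+\del,-]$ is identically zero and $H_*(\mathfrak g_{hol})=\mathfrak g_{hol}$, which is concentrated in degree $1$ and two-dimensional. Since both $\mathfrak g$ and $\mathfrak g_{hol}$ vanish in degrees $\le 0$, the theorem reduces to two assertions: (a) $f$ induces an isomorphism $H_1(\mathfrak g)\xrightarrow{\sim}\mathfrak g_{hol}$, and (b) $H_k(\mathfrak g)=0$ for all $k\ge 2$. Assertion (a) is a finite computation with the relations \labelcref{relations}: writing $\ad_d$ on $\mathfrak g_1=\mathrm{span}\{\mubar,\delbar,\del,\mu\}$ as a $3\times 4$ matrix into $\mathfrak g_2=\mathrm{span}\{[\delbar,\delbar],[\delbar,\del],[\del,\del]\}$, one finds it has rank $2$, so $\dim H_1(\mathfrak g)=2$; and $\delbar+2\mubar-\mu$ and $\del+2\mu-\mubar$ are $\ad_d$-cocycles mapping under $f$ onto $\delbar$ and $\del$, so $Hf$ is already surjective, hence bijective, in degree $1$.

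For (b) I would reduce to the free Lie algebra $\mathfrak h$. By \Cref{cor1} we have $\mathfrak g_k=\mathfrak h_k$ for $k\ge 2$ with matching differentials, and $\mathfrak h\subset\mathfrak g$ is a sub-DGLA (being a DG ideal); a trivial comparison of the two complexes in degrees $\ge 2$ (the only difference being the larger space $\mathfrak g_1\supset\mathfrak h_1$ in degree $1$) then shows $H_k(\mathfrak g)=0$ for $k\ge 2$ as soon as the complex $(\mathfrak h,\ad_d)$ is acyclic. Under the isomorphism $\mathfrak h\cong\mathfrak h'$ from the proof of \Cref{thm1}, the restriction $\ad_d|_{\mathfrak h}$ becomes the derivation $\partial=D_{\mubar}+D_{\mu}+\ad_{\delbar}+\ad_{\del}$ of the free graded Lie algebra on $\delbar,\del$, explicitly $\partial\delbar=[\delbar,\delbar]+[\delbar,\del]-\frac12[\del,\del]$ and $\partial\del=-\frac12[\delbar,\delbar]+[\delbar,\del]+[\del,\del]$. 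So everything comes down to showing $(\mathfrak h,\partial)$ is acyclic.

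The crux is that $\partial$ decouples after a linear change of generators. Passing to the enveloping algebra $U\mathfrak h=T(\delbar,\del)$, one looks for $w=a\delbar+b\del$ with $\partial w$ proportional to $w^2$; a direct computation shows the condition is exactly $a^2-ab+b^2=0$, which over $\mathbb C$ has two distinct solutions (with $a/b$ a primitive sixth root of unity). Rescaling, one obtains a basis $w_1,w_2$ of $\mathfrak h_1$ with $\partial w_i=w_i^2$ in $T(w_1,w_2)$; since $\partial(w_i^{2k+1})=w_i^{2k+2}$ and $\partial(w_i^{2k})=0$, the augmentation ideal $\overline{T(w_i)}$ of each sub-DG-algebra $T(w_i)$ is acyclic. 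Now $T(w_1,w_2)=T(w_1)\ast T(w_2)$ is a free product, so its augmentation ideal is the direct sum of all alternating tensor products $\overline{T(w_{i_1})}\otimes\cdots\otimes\overline{T(w_{i_n})}$; as $\partial$ preserves each factor, this is a direct sum of complexes, and by the Künneth theorem every summand is acyclic. Hence $H(U\mathfrak h,\partial)=\mathbb C$. Finally, since $\mathrm{char}\,\mathbb C=0$ we have $H(U\mathfrak h,\partial)\cong U\big(H(\mathfrak h,\partial)\big)$ (either cite \cite{FHT}, or run the PBW-filtration spectral sequence, whose $E_1$-page is $S(H\mathfrak h)$, and use that $\mathfrak h$, hence $H\mathfrak h$, is concentrated in positive degrees), and the enveloping algebra of a nonzero positively graded Lie algebra is never $\mathbb C$; therefore $H(\mathfrak h,\ad_d)=0$, which completes the proof.

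I expect the main obstacle to be precisely the acyclicity of $(\mathfrak h,\ad_d)$: on the infinite-dimensional free Lie algebra this differential looks intractable, and the essential insight is the change of variables over $\mathbb C$ that turns it into two uncoupled one-generator differentials, together with the passage to $U\mathfrak h$ where the free-product structure makes acyclicity immediate. Justifying $H(UL)\cong U(HL)$ in characteristic zero is a secondary but standard point.
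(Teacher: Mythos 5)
Your proof is correct, but it takes a genuinely different route from the paper's. The paper never touches $(\mathfrak{h},\ad_d)$ directly: it first computes $H(\mathfrak{h},\ad_\mubar)$ (via a mapping-cone/exact-triangle argument on $U\mathfrak{h}$, showing it equals $\Lambda(\delbar,[\del,\delbar])$ and hence vanishes above degree $2$), and then transfers this to $\ad_d$ using the Cirici--Wilson Fr\"olicher-type spectral sequence $H(H(\mathfrak{g},\ad_\mubar),\ad_\delbar)\Rightarrow H(\mathfrak{g},\ad_d)$, finishing with the same explicit low-degree check you do (your cocycles $2\mubar+\delbar-\mu$ and $-\mubar+\del+2\mu$ span the same $2$-dimensional kernel as the paper's $d$ and $3\mubar+\delbar-\del-3\mu$). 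You instead attack $\ad_d$ on $\mathfrak{h}$ head-on: the change of basis $w_1,w_2$ with $\partial w_i=w_i^2$ (available because $a^2-ab+b^2$ splits over $\mathbb{C}$), combined with the free-product decomposition of $\overline{T(w_1,w_2)}$ into shape-summands preserved by $\partial$ and K\"unneth, gives $H(U\mathfrak{h},\ad_d)=\mathbb{C}$ in one stroke; I checked the quadratic condition and the rank computations and they are right. Both arguments lean on $H(UL)\cong U(HL)$ in characteristic zero, and your reduction of $H^k(\mathfrak{g})$ to $H^k(\mathfrak{h})$ for $k\ge 2$ (identical complexes in degrees $\ge 3$, a quotient in degree $2$) is sound. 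What the paper's route buys is the explicit computation of $H(\mathfrak{h},\ad_\mubar)$, which it reuses elsewhere, and independence from roots of unity in the ground field; what yours buys is a self-contained, spectral-sequence-free argument and the stronger conclusion that $(\mathfrak{h},\ad_d)$ is acyclic in \emph{all} degrees, not just $>2$. The only point worth flagging is cosmetic: your appeal to the splitting of $a^2-ab+b^2$ uses that the ground field contains primitive sixth roots of unity, which holds here since the paper works over $\mathbb{C}$ (and in any case acyclicity descends along field extensions).
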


The proof of this theorem relies on the following proposition.

\begin{proposition}
$H^k(\mathfrak{h},\ad_\mubar)=0$ for $k>2$.
\end{proposition}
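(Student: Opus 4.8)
The plan is to pass to the universal enveloping algebra $U\mathfrak{h}$, which by \Cref{thm1} is the free associative algebra $T(\delbar,\del)$, to recognize the differential induced there by $\ad_\mubar$ as a cobar‑type complex, and to read off its cohomology.

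First I would record that $\ad_\mubar$ extends to a derivation $D$ of $U\mathfrak{h}=T(\delbar,\del)$ determined on generators by $D\delbar=0$ and $D\del=-\tfrac12[\delbar,\delbar]=-\delbar^{2}$, and that $D$ is in fact a \emph{coderivation} for the standard connected cocommutative Hopf structure on $U\mathfrak{h}$ (checked on generators: $\Delta D\del=\Delta(-\delbar^{2})=-\delbar^{2}\otimes 1-1\otimes\delbar^{2}=(D\otimes 1+1\otimes D)\Delta\del$). Consequently $H^{*}(U\mathfrak{h},D)$ is again a connected cocommutative graded Hopf algebra, so by Milnor–Moore it is the enveloping algebra of its primitives; and since over $\mathbb{C}$ one has $H(U\mathfrak{h})\cong U(H\mathfrak{h})$ for the differential graded Lie algebra $(\mathfrak{h},\ad_\mubar)$ (via PBW), those primitives are precisely $H^{*}(\mathfrak{h},\ad_\mubar)$. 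It therefore suffices to prove $\dim_{\mathbb{C}}H^{k}(U\mathfrak{h},D)=1$ for every $k\ge 0$: a PBW dimension count then forces the graded Lie algebra $H^{*}(\mathfrak{h},\ad_\mubar)$ — whose enveloping algebra must have Poincaré series $1/(1-q)$ — to be one‑dimensional in degree $1$ and in degree $2$ and zero above, while a one‑line check shows $\delbar$ and $[\delbar,\del]$ are $\ad_\mubar$‑closed (using $[\delbar,[\delbar,\delbar]]=0$, valid over $\mathbb{C}$) but not $\ad_\mubar$‑exact, so these two degrees are realized and nothing survives in degree $>2$.

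To compute $H^{*}(U\mathfrak{h},D)$ I would identify $(T(\delbar,\del),D)$ — with $\delbar,\del$ in degree $1$ and $D$ of degree $+1$ — with the normalized cobar complex computing $\operatorname{Ext}^{*}_{A}(\mathbb{C},\mathbb{C})$ for the augmented algebra $A=\mathbb{C}[u]/(u^{3})$ (with $u$ placed in degree $0$). Indeed, the cobar complex of a finite‑dimensional coalgebra $C$ is the tensor algebra on $\overline{C}{}^{\vee}$ with differential dual to the multiplication of $A=C^{\vee}$ and cohomological grading given by tensor length; writing $\delbar=u^{\vee}$ and $\del=-(u^{2})^{\vee}$, the relations $u\cdot u=u^{2}$ and $u\cdot u^{2}=0$ dualize exactly to $D\delbar=0$ and $D\del=-\delbar^{2}$, with tensor length matching total degree and the Koszul signs matching the Leibniz signs of $D$. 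Hence $H^{k}(U\mathfrak{h},D)\cong\operatorname{Ext}^{k}_{\mathbb{C}[u]/(u^{3})}(\mathbb{C},\mathbb{C})$, and the $2$‑periodic minimal free resolution $\cdots\xrightarrow{\,u^{2}\,}A\xrightarrow{\,u\,}A\xrightarrow{\,u^{2}\,}A\xrightarrow{\,u\,}A\to\mathbb{C}\to 0$ shows this is one‑dimensional in every degree. Together with the previous paragraph this finishes the proof, and in fact pins down $H^{*}(\mathfrak{h},\ad_\mubar)$ as the abelian graded Lie algebra on $\delbar$ (degree $1$) and $[\delbar,\del]$ (degree $2$).

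The step I expect to demand the most care is the cobar/$\operatorname{Ext}$ identification: making the grading dictionary airtight (the algebra $\mathbb{C}[u]/(u^{3})$ is ungraded, so the cohomological grading on the cobar side must be produced entirely from tensor length), checking that the sign in $D\del=-\tfrac12[\delbar,\delbar]$ is genuinely the cobar sign, and confirming that "$H(U\mathfrak{h})=U(H\mathfrak{h})$" and Milnor–Moore apply verbatim in the $\mathbb{Z}$‑graded (super) setting with the brackets of \eqref{relations}. Should one prefer to avoid this machinery, the same one‑dimensionality is obtainable by hand: $D$ preserves the weight $w=2p+q$, where $p$ and $q$ count occurrences of $\del$ and $\delbar$, so $(U\mathfrak{h},D)$ splits as a direct sum of finite complexes $U\mathfrak{h}^{(w)}$ supported in degrees $\lceil w/2\rceil,\dots,w$ with $\dim U\mathfrak{h}^{(w)}_{\,w-p}=\binom{w-p}{p}$; one computes the Euler characteristic $\sum_{p}(-1)^{w-p}\binom{w-p}{p}$, which is $6$‑periodic in $w$ with values $1,-1,0,1,-1,0,\dots$, and checks that each $U\mathfrak{h}^{(w)}$ has cohomology concentrated in a single degree — equivalently, that the bar‑type differential "replace a $\del$ by $-\delbar^{2}$" has maximal rank except at one spot. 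That last rank computation is exactly the labor the cobar identification circumvents, so I would pursue the cobar route and keep the weight decomposition only as a fallback.
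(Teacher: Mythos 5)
Your argument is correct, and while it shares the paper's opening reduction---passing to $U\mathfrak{h}=T(\delbar,\del)$ and using $H(U\mathfrak{h},\ad_\mubar)\cong U\bigl(H(\mathfrak{h},\ad_\mubar)\bigr)$, which the paper cites from Quillen and you re-derive via the coderivation check plus Milnor--Moore---its computational core is genuinely different. The paper computes $H(U\mathfrak{h},\ad_\mubar)$ by writing $\ad_\mubar$ in matrix form under $B_{k-1}\oplus B_{k-1}\cong B_k$, $(x,y)\mapsto \del x+\delbar y$, recognizing a shift of $B_\bullet$ as the mapping cone of $-\delbar$, and inducting along the resulting long exact sequence; you instead identify $(T(\delbar,\del),D)$ with the normalized cobar complex of $\bigl(\mathbb{C}[u]/(u^3)\bigr)^{\vee}$, so that $H^k(U\mathfrak{h},D)\cong\operatorname{Ext}^k_{\mathbb{C}[u]/(u^3)}(\mathbb{C},\mathbb{C})$, which the $2$-periodic minimal resolution shows is one-dimensional in every degree. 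The dictionary does check out: dualizing $u\cdot u=u^2$, $u\cdot u^2=0$ gives $u^{\vee}\mapsto 0$ and $(u^2)^{\vee}\mapsto u^{\vee}\otimes u^{\vee}$, matching $D\delbar=0$, $D\del=-\delbar^2$ after setting $\del=-(u^2)^{\vee}$ (any residual global sign is absorbed by the automorphism $\del\mapsto-\del$); cohomological degree equals tensor length equals total degree since both generators sit in degree $1$; and the Leibniz signs agree because the desuspended generators are odd on both sides. Your closing PBW Poincar\'e-series count (the series $\prod_{i\,\mathrm{odd}}(1+q^{i})^{k_i}\prod_{i\,\mathrm{even}}(1-q^{i})^{-k_i}$ must equal $1/(1-q)$, and $\delbar$, $[\del,\delbar]$ are closed and non-exact, forcing $k_1=k_2=1$ and $k_i=0$ otherwise) replaces the paper's direct comparison with $\Lambda(\delbar,[\del,\delbar])\subset H(U\mathfrak{h},\ad_\mubar)$ and is equally rigorous. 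What your route buys is a conceptual explanation of the $2$-periodicity, identifying $(U\mathfrak{h},\ad_\mubar)$ as a Koszul-type dual of $\mathbb{C}[u]/(u^3)$; what it costs is exactly the sign and grading bookkeeping you flag, which the paper's more elementary mapping-cone induction avoids. One cosmetic slip: the cobar complex of $C=A^{\vee}$ is the tensor algebra on (a desuspension of) $\overline{C}=\overline{A}^{\vee}$, not on $\overline{C}^{\vee}$; your subsequent formulas use the correct object.
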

\begin{proof}
First of all, we note $\ad_\mubar\delbar=0$ and
\[
\ad_\mubar[\del,\delbar]=[\ad_\mubar \del,\delbar]-[\del,\ad_\mubar \delbar]=-\frac{1}{2}[[\delbar,\delbar],\delbar]=0.
\]
We leave it to the reader to check when $k=1,2$, $H^k(\mathfrak{h},\ad_\mubar)$ is one-dimensional and spanned by (the equivalence classes of) $\delbar$ and $[\del,\delbar]$ respectively. Next we observe $H(\mathfrak{h},\ad_\mubar)$ is a Lie algebra and $H^1(\mathfrak{h})\oplus H^2(\mathfrak{h})$ forms an \textit{abelian} Lie subalgebra since $[\delbar,\delbar]=-2[\mubar,\del]$ is $\ad_\mubar$-exact, and $[\delbar,[\del,\delbar]]=0$, $[[\del, \delbar],[\del,\delbar]]=0$ by Jacobi identities.

Now consider the universal enveloping algebra $UH(\mathfrak{h},\ad_\mubar)$ of $H(\mathfrak{h},\ad_\mubar)$. It contains the universal enveloping algebra of the abelian Lie subalgebra $H^1(\mathfrak{h})\oplus H^2(\mathfrak{h})$, which is the free graded commutative algebra $\Lambda(\delbar,[\del,\delbar])$ generated by $\delbar$ and $[\del,\delbar]$. Meanwhile, since the universal enveloping algebra functor commutes with cohomology (see e.g. \cite[Appx B. Prop. 2.1]{Quillen}), we have $UH(\mathfrak{h},\ad_\mubar)=H(U\mathfrak{h},\ad_\mubar)$  where $\ad_\mubar$ on $U\mathfrak{h}$ is the extended adjoint action. So we get
\[
\Lambda(\delbar,[\del,\delbar])\subset H(U\mathfrak{h},\ad_\mubar).
\]
By Poincar\'e-Birkhoff-Witt theorem, our proposition is equivalent to $\Lambda(\delbar,[\del,\delbar])= H(U\mathfrak{h},\ad_\mubar)$. This equality clearly holds in degrees $\le 2$. We will prove this by induction on degree, but we need to make some preparations.

For simplicity of notation, denote $B=U\mathfrak{h}$, which is the free tensor algebra on $\delbar,\del$ by \Cref{thm1}. Under the isomorphism
\[
\phi: B_{k-1}\oplus B_{k-1}\cong B_k, (x,y)\mapsto \del x+\delbar y,
\]
the differential $\ad_\mubar|_{B_k}$ can be written as the matrix
\begin{equation}\label{equation-interesting}
\ad_\mubar|_{B_k}\cong
\begin{pmatrix}
 -\ad_\mubar|_{B_{k-1}} & 0 \\
-\delbar|_{B_{k-1}} & -\ad_\mubar|_{B_{k-1}}
\end{pmatrix}
\end{equation}
by using the relations \labelcref{relations}. To see this, we compute for $x,y\in B_{k-1}$
\begin{equation*}
\begin{aligned}
    [\mubar, \del x+\delbar y]&=\mubar\del x-(-1)^k\del x\mubar+\mubar \delbar y-(-1)^k\delbar y\mubar\\
    &= -\delbar^2x-\del\mubar x-(-1)^k\del x\mubar-\delbar\mubar y-(-1)^k\delbar y\mubar\\
    &= -\del\left(\mubar x-(-1)^{k-1}x\mubar\right)-\delbar\left(\delbar x+\mubar y-(-1)^{k-1}y\mubar\right)\\
    &= -\del[\mubar,x]-\delbar(\delbar x+[\mubar,y]).
\end{aligned}
\end{equation*}
In particular, by setting $x=0$, we see $\ad_\mubar$ skew commutes with $\delbar$. This means both $\pm\delbar$ are morphisms of cochain complexes $\pm\delbar:B_\bullet\to B_\bullet[1]$, where $B_\bullet=(B,\ad_\mubar)$. Moreover, \labelcref{equation-interesting} shows the mapping cone of $-\delbar$ is isomorphic to $B_\bullet[2]$ by $\phi$. Then the inclusion of $B_\bullet[1]$ into the mapping cone of $-\delbar$ is identified with $\delbar: B_\bullet[1]\to B_\bullet[2]$, and the projection from the mapping cone of $-\delbar$ onto $B_\bullet[1]$ is identified with $\delta: B_\bullet[2]\to B_\bullet[1]$ which takes $\del x+\delbar y$ to $x$. It follows we have an exact triangle
\[
B_\bullet\xrightarrow{-\delbar} B_\bullet[1]\xrightarrow{\delbar} B_\bullet[2]\xrightarrow{\delta} B_\bullet[1].
\]
This exact triangle induces a long exact sequence in cohomology
\[
\cdots\to H^{k-2}(B_\bullet)\xrightarrow{-\delbar} H^{k-1}(B_\bullet)\xrightarrow{\delbar} H^k(B_\bullet)\xrightarrow{\delta} H^{k-1}(B_\bullet)\xrightarrow{-\delbar} H^k(B_\bullet)\to\cdots
\]

Now we can inductively prove $H(B_\bullet)=\Lambda(\delbar,[\del,\delbar])$. We note $\Lambda(\delbar,[\del,\delbar])$ is one-dimensional in each degree and spanned by powers of $[\del,\delbar]$ and $\delbar$ times powers of $[\del,\delbar]$. Assume the desired equality is proved in degrees $<k$. Observe that $\delbar$ vanishes on $\left(\Lambda(\delbar,[\del,\delbar])\right)^{\text{odd}}$, so if $k$ is even then from the above long exact sequence we have $\delta: H^k(B_\bullet)\to H^{k-1}(B_\bullet)$ is an isomorphism. On the other hand, if $k$ is odd, then $\delbar: H^{k-1}(B_\bullet)\to H^k(B_\bullet)$ is monic since $\delbar$ takes $\left(\Lambda(\delbar,[\del,\delbar])\right)^{\text{even}}$ injectively into $\left(\Lambda(\delbar,[\del,\delbar])\right)^{\text{odd}}\subset H^{\text{odd}}(B_\bullet)$ and thus the above long exact sequence implies $\delbar: H^{k-1}(B_\bullet)\to H^k(B_\bullet)$ is an isomorphism. So in either case we have $H^k(B_\bullet)\cong H^{k-1}(B_\bullet)$, and in particular $H^k(B_\bullet)$ is one-dimensional. But $H^k(B_\bullet)\supset\left(\Lambda(\delbar,[\del,\delbar])\right)^{k}$, therefore $H^k(B_\bullet)$ must be equal to $\left(\Lambda(\delbar,[\del,\delbar])\right)^{k}$. This finishes the inductive step and thus completes the proof.
\end{proof}

Now \Cref{quasiiso} follows as a corollary.

\begin{proof}[Proof of \Cref{quasiiso}]
One can explicitly find
\begin{align*}
    & H^1(\mathfrak{g},\ad_d)=\text{span}\{d,3\mubar+\delbar-\del-3\mu\},\\ 
   & H^1(\mathfrak{g}_{hol},\ad_d)=\text{span}\{d,\delbar-\del\},
\end{align*}
and $H^2(\mathfrak{g},\ad_d)=H^2(\mathfrak{g}_{hol},\ad_d)=0$. So $f$ induces isomorphisms on $H^1$ and $H^2$. It remains to show $f$ also induces an isomorphism on $H^k$ for $k>2$. But since $\mathfrak{g}_{hol}$ is concentrated in degrees $\le 2$, so it suffices to prove $H^k(\mathfrak{g},\ad_d)=0$ for $k>2$. For this, we use the previous proposition.

 Note that by \Cref{cor1} $\mathfrak{g}=\mathfrak{h}$ in degrees $\ge 2$, so we have $H^k(\mathfrak{g},\ad_\mubar)=H^k(\mathfrak{h},\ad_\mubar)=0$ for $k>2$. We claim the generalized Fr\"olicher spectral sequence of Cirici and Wilson \cite{Cirici-Wilson} implies $H^k(\mathfrak{g},\ad_d)=0$ for $k>2$. Indeed, their spectral sequence, even though designed for studying almost complex manifolds, applies more generally to (non-negatively) bigraded $\mathfrak{g}$-modules, and in particular applies to the adjoint action of $\mathfrak{g}$ on itself. Thus we have a spectral sequence
\[
E_1=H(H(\mathfrak{g},\ad_\mubar),\ad_\delbar)\Longrightarrow H(\mathfrak{g},\ad_d).
\]
Therefore the claim follows, and the proof is complete.
\end{proof}

Above, we fixed a differential on $\mathfrak{g}$ but in fact, there are many differentials on $\mathfrak{g}$ such as the inner differentials; i.e. $\ad_a$ for $a \in \mathfrak{g}_1$ such that $\ad_a^2=0$. Moreover, since $\mathfrak{g}_{hol}$ is abelian, all of its inner differentials are trivial. This means, $f:\mathfrak{g} \to \mathfrak{g}_{hol}$ is \textit{always} a morphism of differential graded Lie algebras no matter what \text{inner} differential we put on $\mathfrak{g}$. So it is natural to ask whether $f:(\mathfrak{g},\ad_a)\to (\mathfrak{g}_{hol},0)$ is a quasi-isomorphism for a general inner differential $\ad_a$.

To answer this question, we must first of all find all the inner differentials of $\mathfrak{g}$. Such a problem naturally lies in the context of deformation theory of graded Lie algebras as follows. Consider $\mathfrak{g}$ as a differential graded Lie algebra with the trivial differential $D \equiv 0$. Then the Maurer-Cartan equation for $(\mathfrak{g},0)$ exactly reads:
\[
[a,a]=0, a\in \mathfrak{g}_1.
\]
So $a\in \mathfrak{g}_1$ is a Maurer-Cartan solution for $(\mathfrak{g},0)$ if and only if $\ad_a$ is a inner differential on $\mathfrak{g}$.

Denote by $MC(\mathfrak{g})$ the algebraic set of Maurer-Cartan solutions for $(\mathfrak{g},0)$. Notice that if $a\in MC(\mathfrak{g})$ then so is $\lambda a$ for all $\lambda\in \mathbb{C}$. This mean $MC(\mathfrak{g})$ is the affine cone over its projectification $\mathbb{P}MC(\mathfrak{g})$.

\begin{proposition} \label{twcubic} The map $\mathbb{P}^1\to \mathbb{P}MC(\mathfrak{g})$, $[s {:} t]\mapsto d_{s,t}=s^3\mubar+s^2 t\delbar+s t^2\del+t^3\mu$ is an isomorphism. In particular $\mathbb{P}MC(\mathfrak{g})$ is a twisted cubic.
\end{proposition}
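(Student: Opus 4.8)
The plan is to reduce everything to computing $[a,a]$ for a general degree-one element, after which the statement becomes the classical determinantal description of the twisted cubic. First I would pin down $\mathfrak{g}_1$: by \Cref{thm1}, $\mathfrak{h}$ is free on $\delbar,\del$, so $\mathfrak{h}_1=\text{span}\{\delbar,\del\}$, and by \Cref{cor1}, $\mathfrak{g}/\mathfrak{h}$ is the abelian Lie algebra $\mathbb{C}\mubar\oplus\mathbb{C}\mu$ concentrated in degree one. Hence $\mathfrak{g}_1=\text{span}\{\mubar,\delbar,\del,\mu\}$ is four-dimensional, and I identify it with $\mathbb{C}^4$ by sending $a=\alpha\mubar+\beta\delbar+\gamma\del+\delta\mu$ to $(\alpha,\beta,\gamma,\delta)$.

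Next I would expand $[a,a]$. Writing $a=\sum_i c_i e_i$ with $(e_1,\dots,e_4)=(\mubar,\delbar,\del,\mu)$, and using that $a$ is odd so that the bracket is \emph{symmetric} on $\mathfrak{g}_1$, we get $[a,a]=\sum_i c_i^2[e_i,e_i]+2\sum_{i<j}c_ic_j[e_i,e_j]$. Substituting the relations \labelcref{relations} — notably $[\mubar,\mubar]=[\mu,\mu]=[\mubar,\delbar]=[\mu,\del]=0$, $[\mubar,\del]=-\tfrac12[\delbar,\delbar]$, $[\mu,\delbar]=-\tfrac12[\del,\del]$ and $[\mubar,\mu]=-[\delbar,\del]$ — collapses everything into $\mathfrak{g}_2=\mathfrak{h}_2$, which by \Cref{cor1} has basis $\{[\delbar,\delbar],[\del,\del],[\delbar,\del]\}$, yielding
\[
[a,a]=(\beta^2-\alpha\gamma)\,[\delbar,\delbar]+(\gamma^2-\beta\delta)\,[\del,\del]+2(\beta\gamma-\alpha\delta)\,[\delbar,\del].
\]
Since these three brackets are linearly independent, $a\in MC(\mathfrak{g})$ if and only if $\alpha\gamma=\beta^2$, $\beta\delta=\gamma^2$, $\alpha\delta=\beta\gamma$; i.e. $MC(\mathfrak{g})\subset\mathbb{C}^4$ is the common zero locus of the three $2\times2$ minors of $\left(\begin{smallmatrix}\alpha&\beta&\gamma\\\beta&\gamma&\delta\end{smallmatrix}\right)$.

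Finally I would invoke the classical fact that this determinantal scheme is the affine cone over the twisted cubic. The third Veronese map $\nu\colon\mathbb{P}^1\to\mathbb{P}^3$, $[s{:}t]\mapsto[s^3:s^2t:st^2:t^3]$, lands in the zero locus by direct substitution; conversely, if the three minors vanish then either $\alpha=0$, forcing $\beta=\gamma=0$ and the point $[0{:}0{:}0{:}1]=\nu[0{:}1]$, or $\alpha\ne0$, in which case normalizing $\alpha=1$ gives $\gamma=\beta^2,\ \delta=\beta^3$ and the point $\nu[1{:}\beta]$. Thus the image of $\nu$ is exactly $\mathbb{P}MC(\mathfrak{g})$, and since $\nu$ is a closed immersion (its inverse on the image being locally a ratio of homogeneous coordinates) it is an isomorphism $\mathbb{P}^1\xrightarrow{\sim}\mathbb{P}MC(\mathfrak{g})$, which under the identification $\mathfrak{g}_1\cong\mathbb{C}^4$ is precisely $[s{:}t]\mapsto d_{s,t}$. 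The only delicate point in the argument is the sign-bookkeeping in the expansion of $[a,a]$ — chiefly remembering that $[x,x]$ need not vanish for an odd element and that the bracket of two degree-one elements is symmetric — together with the appeal to \Cref{cor1} for the independence of $[\delbar,\delbar],[\del,\del],[\delbar,\del]$; beyond that there is no serious obstacle.
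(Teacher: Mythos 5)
Your proposal is correct and follows essentially the same route as the paper: expand $[a,a]$ for a general $a\in\mathfrak{g}_1$ using the relations \labelcref{relations}, obtain the three $2\times 2$ minors of $\left(\begin{smallmatrix}\alpha&\beta&\gamma\\ \beta&\gamma&\delta\end{smallmatrix}\right)$ as the defining equations, and recognize the twisted cubic with its Veronese parametrization. You are somewhat more explicit than the paper about two points it leaves implicit — the linear independence of $[\delbar,\delbar],[\del,\del],[\delbar,\del]$ in $\mathfrak{g}_2$ (which indeed needs the freeness of $\mathfrak{h}$ from \Cref{thm1}, not just the spanning statement of \Cref{cor1}) and the verification that the Veronese map is an isomorphism onto the zero locus — and both additions are sound.
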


\begin{proof}
We express $a$ in the basis: $a=x\mubar + y\delbar + z\del + w \mu$. Then, by expanding the (quadratic) equation $[a,a]=0$, we have that $a$ is a solution to this Maurer-Cartan equation if and only if the following three quadratic equations are satisfied:
\begin{align*}
    &xz-y^2 = 0, \\
    &yw-z^2 = 0, \\
    &xw-yz = 0.
\end{align*}

These are exactly the equations in $\mathbb{P}^3$ defining the standard twisted cubic. The map $[s {:} t]\mapsto d_{s,t}=s^3\mubar+s^2 t\delbar+s t^2\del+t^3\mu$ is a parametrization of the twisted cubic.
\end{proof}

\begin{remark}
Compare the above to \cite[Lemma 3.1]{TT} where a different parametrization of $MC(\mathfrak{g})$ is obtained. Also, under the parametrization given in \Cref{twcubic}, we have $d_{1,0}=\mubar,d_{0,1} = \mu$ and $d_{1,1}=d$.
\end{remark}

It follows from the above proposition that $MC(\mathfrak{g})$ is in bijection with $\mathbb{A}^2$. This is in fact expected from deformation theory point of view. To elaborate, we first observe that the deformation problems associated to inner differentials are all \textit{equivalent}. Indeed, if $[\delta,-]$ is an inner differential, its corresponding Maurer-Cartan equation is
\[
[\delta+a,\delta+a]=0, a\in\mathfrak{g}_1.
\]
Thus $a\mapsto \delta+a$ establishes an isomorphism $MC(\mathfrak{g},\ad_\delta)\cong MC(\mathfrak{g})$. In particular we have $MC(\mathfrak{g})\cong MC(\mathfrak{g},\ad_d)$. Recall \Cref{quasiiso} shows $(\mathfrak{g},\ad_d)$ is quasi-isomorphic to $(\mathfrak{g}_{hol},0)$. Therefore by Goldman-Millson's theorem \cite{Goldman-Millson}, $MC(\mathfrak{g},\ad_d)$ is in bijection with $MC(\mathfrak{g}_{hol})$ (the gauge actions are trivial since $\mathfrak{g}_0=(\mathfrak{g}_{hol})_0=0$). The latter is easily seen to be isomorphic to $\mathbb{A}^2$.

We note that, even though $MC(\mathfrak{g})$ is in bijection with $\mathbb{A}^2$, it is \textit{not} isomorphic to $\mathbb{A}^2$ as algebraic varieties, since $MC(\mathfrak{g})$ is singular at its cone point while $\mathbb{A}^2$ is smooth.

Now that we have found all the inner differentials $d_{s,t}$ on $\mathfrak{g}$, we would like to determine $H(\mathfrak{g},\ad_{d_{s,t}})$. For simplicity of notation, henceforth we shall assume it is understood that the action of $d_{s,t}$ on $\mathfrak{g}$ is through the adjoint action, and so we simply write $d_{s,t}$ for $\ad_{d_{s,t}}$.

The first cohomology $H^1(\mathfrak{g},d_{s,t})$ coincides with the group of $1$-cocycles $Z^1(\mathfrak{g},d_{s,t})$ since $\mathfrak{g}_0=0$. By \cite{Goldman-Millson}, $Z^1(\mathfrak{g},d_{s,t})$ is the Zariski tangent space of $MC(\mathfrak{g})$ at $d_{s,t}$. So for $(s,t)\neq (0,0)$, $H^1(\mathfrak{g},d_{s,t})$ is $2$-dimensional and spanned by
\begin{align*}
    \frac{\del}{\del s}d_{s,t}&=3s^2\mubar+2st\delbar+t^2\del,\\
    \frac{\del}{\del t}d_{s,t}&=3t^2\mu+2ts\del+s^2\delbar.
\end{align*}
Alternatively notice that $[d_{s,t},d_{s,t}]=0$ implies $0=\frac{\del}{\del s}[d_{s,t},d_{s,t}]=2[\frac{\del}{\del s}d_{s,t},d_{s,t}]$ and similarly $2[\frac{\del}{\del t}d_{s,t},d_{s,t}]=0$. Hence $\frac{\del}{\del s}d_{s,t}$ and $\frac{\del}{\del t}d_{s,t}$ are contained in $H^1(\mathfrak{g},d_{s,t})$. It is an easy exercise for the reader to verify they are linearly independent and span $H^1(\mathfrak{g},d_{s,t})$.

The following lemma picks out a geometrically interesting basis for $H^1(\mathfrak{g},d_{s,t})$.

\begin{lemma}\label{goodbasis}
For $st\neq 0$, $d_{s,t}$ and $$d_{s,t}^J=\sqrt{-1}(3s^3\mubar+s^2 t\delbar-st^2\del-3t^3\mu)$$ span $H^1(\mathfrak{g},d_{s,t})$. Moreover
\begin{enumerate}[(i)]
    \item $d_{s,t}$ is a real operator (i.e. it is conjugate to itself) if and only if $d_{s,t}^J$ is a real operator;
    \item $[d_{s,t}^J,d_{s,t}^J]=0$ if and only if $[\del,\del]=[\delbar,\delbar]=[\del,\delbar]=0$.
\end{enumerate}
\end{lemma}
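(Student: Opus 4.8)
The plan is to verify the two assertions by direct computation, using the relations \labelcref{relations} and the fact (from \Cref{quasiiso} and the discussion following it) that $H^1(\mathfrak{g},d_{s,t})$ is $2$-dimensional, spanned by $\frac{\partial}{\partial s}d_{s,t}$ and $\frac{\partial}{\partial t}d_{s,t}$. First I would check that $d_{s,t}^J$ is a cocycle and that $\{d_{s,t},d_{s,t}^J\}$ is a basis. Since $[d_{s,t},d_{s,t}]=0$ already gives $[d_{s,t},-]^2=0$, it suffices to write $d_{s,t}^J$ as a combination of $\frac{\partial}{\partial s}d_{s,t}$ and $\frac{\partial}{\partial t}d_{s,t}$; indeed one computes
\[
d_{s,t}^J = \sqrt{-1}\left( s\cdot \tfrac{\partial}{\partial s}d_{s,t} - t\cdot \tfrac{\partial}{\partial t}d_{s,t}\right),
\]
which lies in $Z^1(\mathfrak{g},d_{s,t})=H^1(\mathfrak{g},d_{s,t})$, and the change-of-basis matrix from $(\frac{\partial}{\partial s}d_{s,t},\frac{\partial}{\partial t}d_{s,t})$ to $(d_{s,t},d_{s,t}^J)$ has determinant a nonzero multiple of $st$ (here is where $st\neq 0$ enters), so the two span. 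This handles the first sentence of the lemma.

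For (i), I would use that complex conjugation on $\Omega_M^{\bullet,\bullet}$ (equivalently on $\mathfrak g$, where it is the $\mathbb{C}$-antilinear involution swapping $\mubar\leftrightarrow\mu$ and $\delbar\leftrightarrow\del$) sends $d_{s,t}$ to $\overline{d_{s,t}} = \bar t^3\mubar + \bar s\bar t^2\delbar + \bar s^2\bar t\del + \bar s^3\mu = d_{\bar t,\bar s}$ up to scalar, and likewise sends $d_{s,t}^J$ to a scalar multiple of $d_{\bar t,\bar s}^J$; both statements "$d_{s,t}$ is real" and "$d_{s,t}^J$ is real" then translate into the same condition on $[s:t]$ (namely that $[\bar t:\bar s]$ and $[s:t]$ define the same point, up to the relevant scalars). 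Carefully matching the scalar factors ($\sqrt{-1}$ versus its conjugate, and the cubes) is the one place to be attentive, but it is bookkeeping. The equivalence then follows by comparing the two translated conditions.

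For (ii), I would expand $[d_{s,t}^J,d_{s,t}^J]=0$ in the basis $\mubar,\delbar,\del,\mu$ exactly as in the proof of \Cref{twcubic}: writing $d_{s,t}^J = \sqrt{-1}(x\mubar + y\delbar + z\del + w\mu)$ with $(x,y,z,w)=(3s^3, s^2t, -st^2, -3t^3)$, the bracket $[d_{s,t}^J,d_{s,t}^J]$ is a $\mathfrak h$-valued expression whose $[\delbar,\delbar]$-, $[\del,\del]$-, $[\delbar,\del]$-coefficients are $-$(up to the $\sqrt{-1}^2$) the three quadratics $xz-y^2$, $yw-z^2$, $xw-yz$ evaluated on this tuple (using $[\mubar,\del]=-\tfrac12[\delbar,\delbar]$, $[\mu,\delbar]=-\tfrac12[\del,\del]$, $[\mubar,\mu]=-[\delbar,\del]$, and $[\mubar,\delbar]=[\mu,\del]=0$, $[\mubar,\mubar]=[\mu,\mu]=0$). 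Here $xz-y^2 = -3s^4t^2 - s^4t^2 = -4s^4t^2$, $yw-z^2 = -3s^2t^4 - s^2t^4 = -4s^2t^4$, and $xw-yz = -9s^3t^3 + s^3t^3 = -8s^3t^3$, all nonzero since $st\neq 0$. Hence $[d_{s,t}^J,d_{s,t}^J]$ is a nonzero linear combination of the three independent elements $[\delbar,\delbar],[\del,\del],[\delbar,\del]$ of $\mathfrak h_2$ with all coefficients nonzero, so it vanishes if and only if each of $[\delbar,\delbar]$, $[\del,\del]$, $[\delbar,\del]$ vanishes; that is, if and only if these three brackets are zero. The main (only real) obstacle is keeping the signs and the factor $\sqrt{-1}^2=-1$ straight while reading off the coefficients, and making sure no coefficient accidentally cancels — which the computation above confirms it does not. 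I would present the coefficient extraction as a short display and leave the elementary arithmetic to the reader.
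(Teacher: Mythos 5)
Your proposal is correct and follows essentially the same route as the paper: the first assertion is obtained by writing $3d_{s,t}=s\,\tfrac{\del}{\del s}d_{s,t}+t\,\tfrac{\del}{\del t}d_{s,t}$ and $d_{s,t}^J=\sqrt{-1}\left(s\,\tfrac{\del}{\del s}d_{s,t}-t\,\tfrac{\del}{\del t}d_{s,t}\right)$ and noting the change of basis is invertible exactly when $st\neq 0$, and the paper leaves (i) and (ii) as ``straightforward.'' Your explicit verification of (i) via $\overline{d_{s,t}}=d_{\bar t,\bar s}$, $\overline{d_{s,t}^J}=d_{\bar t,\bar s}^J$, and of (ii) via the coefficients $-4s^4t^2$, $-4s^2t^4$, $-8s^3t^3$ on the linearly independent brackets correctly fills in those details.
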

\begin{proof}
Note that
\begin{align*}
    3d_{s,t}&=s \frac{\del}{\del s}d_{s,t}+t\frac{\del}{\del s}d_{s,t},\\
    \sqrt{-1}d_{s,t}^J&=-s \frac{\del}{\del s}d_{s,t}+t\frac{\del}{\del s}d_{s,t}.\end{align*}
The rest is straightforward.
\end{proof}
\begin{remark}
In particular $H^1(\mathfrak{g},d)$ is spanned $d$ and $d^J = \sqrt{-1}(3 \mubar + \delbar - \del - 3 \mu)$. The authors learned from Scott Wilson why this is geometrically significant: the action of the operator $d^J$ on differential forms of an almost complex manifold $(M,J)$ coincides with the (graded) commutator $\mathcal{L}_J=[d,J]$, where $J$ acts on differential forms by extending its action on $1$-forms to an derivation on the de Rham algebra of all forms.
See \cite{CKT19} for more on the operator $\mathcal{L}_J$.
\end{remark}

Recall we have proved for $(s,t)=(1,0)$ and $(1,1)$ that $H^k(\mathfrak{g},d_{s,t})$ vanishes for $k>2$. Then by symmetry and $d_{s,0}=s^3d_{1,0}$, $d_{0,t}=t^3 d_{0,1}$, the same holds for all $(s,0),(0,t)$ with $s,t\neq 0$. It turns out the same is again true for all $(s,t)\neq (0,0)$. This follows immediately from the lemma below by considering the adjoint action of $\mathfrak{g}$ on itself.

\begin{lemma}
Let $V$ be a bigraded module of $\mathfrak{g}$, that is $V$ carries a bigrading and the $\mathfrak{g}$-action on $V$ is a bigraded one. Then the cohomology of $V$ with respect to $d_{s,t}$, $H(V,d_{s,t})$, is naturally isomorphic to $H(V,d)$ for $st\neq 0$.
\end{lemma}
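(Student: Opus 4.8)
The plan is to produce an explicit isomorphism of cochain complexes $(V,\ad_d)\xrightarrow{\ \sim\ }(V,\ad_{d_{s,t}})$, from which the assertion on cohomology — together with its naturality — follows formally. The key observation is that the coefficients $s^3,s^2t,st^2,t^3$ in $d_{s,t}=s^3\mubar+s^2t\delbar+st^2\del+t^3\mu$ are arranged exactly so as to be absorbed by a rescaling of $V$ that is diagonal with respect to the bigrading.

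Concretely, for $st\neq 0$ I would define the linear automorphism $T=T_{s,t}\colon V\to V$ that acts on the bidegree $(p,q)$ summand $V^{p,q}$ by multiplication by the scalar $s^{p+2q}\,t^{2p+q}$. This is invertible precisely because $s$ and $t$ are nonzero (negative powers occur), and it is defined summand by summand, so there is no convergence issue even when $V$ is infinite-dimensional in each bidegree. Since $\mubar,\delbar,\del,\mu$ act on $V$ with bidegrees $(-1,2),(0,1),(1,0),(2,-1)$, conjugation by $T$ multiplies $\ad_\mubar$ by $c_{p-1,q+2}/c_{p,q}$, and similarly for the others; because the exponent $s^{p+2q}t^{2p+q}$ is \emph{linear} in $(p,q)$, each such ratio is a constant depending only on the bidegree shift. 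A one-line computation gives the constants $s^3,s^2t,st^2,t^3$ respectively, so that $T\circ\ad_d\circ T^{-1}=\ad_{d_{s,t}}$ as operators on $V$.

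It follows that $T$ is an isomorphism of cochain complexes $(V,\ad_d)\to(V,\ad_{d_{s,t}})$, hence induces an isomorphism $H(V,d)\cong H(V,d_{s,t})$. Naturality in $V$ is immediate: $T$ depends only on the bigrading, so every morphism of bigraded $\mathfrak{g}$-modules commutes with $T$ and therefore with the induced maps on cohomology. Taking $V=\mathfrak{g}$ with its adjoint action recovers in particular that $H^k(\mathfrak{g},d_{s,t})=0$ for $k>2$ whenever $st\neq 0$, since this was already established for $d=d_{1,1}$.

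I do not expect a genuine obstacle; the only thing to "discover" is the monomial weight $s^{p+2q}t^{2p+q}$, and the one point worth double-checking is that this single rescaling fixes all four components of $d$ simultaneously. This works because the exponent vectors $(p+2q,\,2p+q)$ attached to the four bidegrees are $(3,0),(2,1),(1,2),(0,3)$, i.e.\ exactly the exponent patterns of $s^3,s^2t,st^2,t^3$ — a reflection of the twisted-cubic shape of $MC(\mathfrak{g})$ from \Cref{twcubic}.
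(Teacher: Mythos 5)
Your proof is correct and is essentially identical to the paper's: the paper defines exactly the same map $\varphi_{s,t}\colon V^{p,q}\to V^{p,q}$, multiplication by $s^{p+2q}t^{2p+q}$, and conjugates $d$ to $d_{s,t}$ by it. Your verification of the four bidegree shifts and the invertibility/naturality remarks match what the paper leaves as "straightforward to verify."
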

\begin{proof}
Let $\varphi_{s,t}: V^{p,q} \to V^{p,q}$ be multiplication by $s^{p+2q} t^{2p+q}$. It is straightforward to verify that $d_{s,t}$ is conjugate to $d$ by $\varphi_{s,t}$; i.e. $\varphi_{s,t} d = d_{s,t} \varphi_{s,t}$. Thus, $\varphi_{s,t}$ induces the desired isomorphism on cohomology.
\end{proof}

\begin{corollary}
$f:(\mathfrak{g},d_{s,t})\to (\mathfrak{g}_{hol},0)$ is a quasi-isomorphism provided $st\neq 0$.
\end{corollary}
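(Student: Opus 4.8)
The plan is to combine the two preceding lemmas with the already-established \Cref{quasiiso}. First I would invoke the last lemma: since $\mathfrak{g}$ acts on itself adjointly and this action is bigraded, we have a natural isomorphism $H(\mathfrak{g},d_{s,t})\cong H(\mathfrak{g},d)$ for all $(s,t)$ with $st\neq 0$, given explicitly by the conjugation $\varphi_{s,t}$. In particular $H^1(\mathfrak{g},d_{s,t})\cong H^1(\mathfrak{g},d)$ and $H^k(\mathfrak{g},d_{s,t})\cong H^k(\mathfrak{g},d)$ for all $k$. The same conjugation argument applies verbatim to the (abelian, hence trivially differentiated) Lie algebra $\mathfrak{g}_{hol}$: define $\psi_{s,t}:\mathfrak{g}_{hol}^{p,q}\to\mathfrak{g}_{hol}^{p,q}$ as multiplication by $s^{p+2q}t^{2p+q}$, noting that $\mathfrak{g}_{hol}$ is concentrated in bidegrees $(1,0)$ and $(0,1)$ and carries the zero differential, so there is nothing to check beyond $\psi_{s,t}$ being an isomorphism when $st\neq 0$.

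Next I would check that $f$ is compatible with these conjugations, i.e. that $\psi_{s,t}\circ f = f\circ\varphi_{s,t}$ up to the obvious matching of bidegrees. Since $f$ is the bigraded quotient map killing $\mubar$ and $\mu$, it preserves bidegrees, and both $\varphi_{s,t}$ and $\psi_{s,t}$ act by the same scalar $s^{p+2q}t^{2p+q}$ on bidegree $(p,q)$; hence the square
\[
\begin{CD}
(\mathfrak{g},d_{s,t}) @>{\varphi_{s,t}}>> (\mathfrak{g},d) \\
@V{f}VV @VV{f}V \\
(\mathfrak{g}_{hol},0) @>{\psi_{s,t}}>> (\mathfrak{g}_{hol},0)
\end{CD}
\]
commutes on the nose. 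Passing to cohomology, the horizontal maps are isomorphisms, and the right vertical map is an isomorphism by \Cref{quasiiso}. Therefore the left vertical map $f_*:H(\mathfrak{g},d_{s,t})\to H(\mathfrak{g}_{hol},0)$ is an isomorphism as well, which is exactly the assertion that $f:(\mathfrak{g},d_{s,t})\to(\mathfrak{g}_{hol},0)$ is a quasi-isomorphism.

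I do not anticipate a genuine obstacle here; the only mild subtlety is the bookkeeping of bidegrees, namely making sure $\varphi_{s,t}$ and $\psi_{s,t}$ are the restrictions of a single bigraded rescaling operator so that the commuting square above is literally valid rather than merely valid "up to grading." Once that is in place the result is a formal consequence of \Cref{quasiiso} and the conjugation lemma. As an alternative, one could bypass the diagram entirely and argue directly: by \Cref{twcubic} and the Goldman--Millson discussion, $(\mathfrak{g},d_{s,t})$ is quasi-isomorphic to $(\mathfrak{g},d)$ as dgla's for $st\neq 0$, and composing such a quasi-isomorphism with $f$ recovers $f$ up to homotopy; but the explicit conjugation route above is cleaner and makes the compatibility transparent.
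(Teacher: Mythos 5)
Your argument is correct, and it is a genuinely different (and arguably more uniform) route than the one the paper takes. The paper disposes of the corollary in two lines: it invokes \Cref{goodbasis} to get the explicit basis $\{d_{s,t},\,d_{s,t}^J\}$ of $H^1(\mathfrak{g},d_{s,t})$, observes that $f$ sends these to the linearly independent elements $s^2t\delbar+st^2\del$ and $\sqrt{-1}(s^2t\delbar-st^2\del)$ of $\mathfrak{g}_{hol}$ when $st\neq 0$, and then notes that all higher cohomology vanishes on both sides (the vanishing for general $(s,t)$ being exactly the content of the conjugation lemma applied to the adjoint representation). You instead bypass \Cref{goodbasis} entirely: you extend the rescaling $\varphi_{s,t}$ to a map $\psi_{s,t}$ of $\mathfrak{g}_{hol}$ by the same bidegree-wise formula, check that $f$ intertwines the two rescalings because $f$ is bigraded and kills only $\mubar,\mu$, and then transport the conclusion of \Cref{quasiiso} across the commuting square. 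Your verification of the one nontrivial point --- that $\varphi_{s,t}$ and $\psi_{s,t}$ act by the \emph{same} scalar $s^{p+2q}t^{2p+q}$ in each bidegree, so the square commutes on the nose --- is exactly what is needed. What your approach buys is that all of $H^\bullet$, not just $H^1$, is handled in one stroke by naturality, with no explicit cocycle computation; what the paper's approach buys is the geometrically meaningful basis $d_{s,t}, d_{s,t}^J$ of $H^1$, which it wants on record anyway for the subsequent remark relating $d^J$ to $\mathcal{L}_J=[d,J]$. One cosmetic remark: the lemma as stated gives $\varphi_{s,t}d=d_{s,t}\varphi_{s,t}$, i.e.\ a chain map $(\mathfrak{g},d)\to(\mathfrak{g},d_{s,t})$, so your horizontal arrows are really the inverses $\varphi_{s,t}^{-1}$; since $st\neq 0$ makes every scalar $s^{p+2q}t^{2p+q}$ invertible this is harmless, but worth saying.
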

\begin{proof}
Using the basis given by \Cref{goodbasis}, $f$ is seen to be an isomorphism on $H^1$. Since higher cohomology groups vanish on both sides, this proves the statement.
\end{proof}
\begin{remark}
For $st=0$, $f$ is no longer a quasi-isomorphism. For example, $H^1(\mathfrak{g},\mubar)$ is spanned by $\mubar$ and $\delbar$ but $f$ takes $\mubar$ to zero. The behavior of the quotient map $f$ naturally puts a Whitney stratification on $MC(\mathfrak{g})$ which in coordinate $(s,t)$ is $\{st\neq 0\}\bigsqcup \{st=0\text{ but }(s,t)\neq (0,0)\}\bigsqcup\{(0,0)\}$. The nullity of the induced map of $f$ on $H^1$ is constant on each stratum.
\end{remark}

\section{Concluding Remarks}\label{sec4}

As mentioned in the introduction, understanding the structure and cohomology of these algebras generated by $\mubar,\delbar,\del,\mu$ is the first step in a larger program of understanding almost complex geometry. The next step is to consider the representations of these algebras since we naturally want to consider their actions on the differential forms of almost complex manifolds. The representations for $\mathfrak{g}_{hol}$ are relatively simple \cite{Stelzig}: they can be decomposed into ``dots'', ``squares'' and ``zigzags''. In contrast, the following example shows the representation theory for $\mathfrak{g}$ can be complicated. Let $\alpha, \beta, \gamma \in \mathbb{C}$.

\begin{center}
\tikzset{every picture/.style={line width=0.75pt}} 

\begin{tikzpicture}[x=0.65pt,y=0.65pt,yscale=-1,xscale=1]

\draw  [fill={rgb, 255:red, 0; green, 0; blue, 0 }  ,fill opacity=1 ] (239.24,248.41) .. controls (239.24,247.03) and (240.42,245.91) .. (241.88,245.91) .. controls (243.35,245.9) and (244.54,247.01) .. (244.55,248.38) .. controls (244.56,249.76) and (243.38,250.88) .. (241.91,250.89) .. controls (240.45,250.9) and (239.25,249.79) .. (239.24,248.41) -- cycle ;
\draw    (247.18,251.92) -- (385.27,313.56) ;
\draw [shift={(387.09,314.38)}, rotate = 204.06] [color={rgb, 255:red, 0; green, 0; blue, 0 }  ][line width=0.75]    (10.93,-3.29) .. controls (6.95,-1.4) and (3.31,-0.3) .. (0,0) .. controls (3.31,0.3) and (6.95,1.4) .. (10.93,3.29)   ;
\draw  [fill={rgb, 255:red, 0; green, 0; blue, 0 }  ,fill opacity=1 ] (388.57,318.2) .. controls (388.56,316.83) and (389.74,315.7) .. (391.21,315.7) .. controls (392.67,315.69) and (393.87,316.8) .. (393.87,318.18) .. controls (393.88,319.55) and (392.7,320.67) .. (391.24,320.68) .. controls (389.77,320.69) and (388.58,319.58) .. (388.57,318.2) -- cycle ;
\draw  [fill={rgb, 255:red, 0; green, 0; blue, 0 }  ,fill opacity=1 ] (165.7,108.83) .. controls (165.69,107.45) and (166.87,106.33) .. (168.33,106.32) .. controls (169.8,106.32) and (170.99,107.43) .. (171,108.8) .. controls (171.01,110.18) and (169.83,111.3) .. (168.36,111.31) .. controls (166.9,111.31) and (165.7,110.2) .. (165.7,108.83) -- cycle ;
\draw  [fill={rgb, 255:red, 0; green, 0; blue, 0 }  ,fill opacity=1 ] (313.53,248.41) .. controls (313.53,247.03) and (314.71,245.91) .. (316.17,245.91) .. controls (317.64,245.9) and (318.83,247.01) .. (318.84,248.38) .. controls (318.85,249.76) and (317.67,250.88) .. (316.2,250.89) .. controls (314.74,250.9) and (313.54,249.79) .. (313.53,248.41) -- cycle ;
\draw  [fill={rgb, 255:red, 0; green, 0; blue, 0 }  ,fill opacity=1 ] (388.57,248.41) .. controls (388.56,247.03) and (389.74,245.91) .. (391.21,245.91) .. controls (392.67,245.9) and (393.87,247.01) .. (393.87,248.38) .. controls (393.88,249.76) and (392.7,250.88) .. (391.24,250.89) .. controls (389.77,250.9) and (388.58,249.79) .. (388.57,248.41) -- cycle ;
\draw  [fill={rgb, 255:red, 0; green, 0; blue, 0 }  ,fill opacity=1 ] (239.99,178.62) .. controls (239.98,177.24) and (241.16,176.12) .. (242.63,176.11) .. controls (244.09,176.11) and (245.28,177.22) .. (245.29,178.59) .. controls (245.3,179.97) and (244.12,181.09) .. (242.65,181.1) .. controls (241.19,181.11) and (239.99,180) .. (239.99,178.62) -- cycle ;
\draw  [fill={rgb, 255:red, 0; green, 0; blue, 0 }  ,fill opacity=1 ] (239.99,108.83) .. controls (239.98,107.45) and (241.16,106.33) .. (242.63,106.32) .. controls (244.09,106.32) and (245.28,107.43) .. (245.29,108.8) .. controls (245.3,110.18) and (244.12,111.3) .. (242.65,111.31) .. controls (241.19,111.31) and (239.99,110.2) .. (239.99,108.83) -- cycle ;
\draw  [fill={rgb, 255:red, 0; green, 0; blue, 0 }  ,fill opacity=1 ] (314.28,178.62) .. controls (314.27,177.24) and (315.45,176.12) .. (316.92,176.11) .. controls (318.38,176.11) and (319.58,177.22) .. (319.58,178.59) .. controls (319.59,179.97) and (318.41,181.09) .. (316.94,181.1) .. controls (315.48,181.11) and (314.29,180) .. (314.28,178.62) -- cycle ;
\draw    (246.43,182.13) -- (384.52,243.77) ;
\draw [shift={(386.35,244.59)}, rotate = 204.06] [color={rgb, 255:red, 0; green, 0; blue, 0 }  ][line width=0.75]    (10.93,-3.29) .. controls (6.95,-1.4) and (3.31,-0.3) .. (0,0) .. controls (3.31,0.3) and (6.95,1.4) .. (10.93,3.29)   ;
\draw    (171.8,111.56) -- (309.89,173.2) ;
\draw [shift={(311.72,174.02)}, rotate = 204.06] [color={rgb, 255:red, 0; green, 0; blue, 0 }  ][line width=0.75]    (10.93,-3.29) .. controls (6.95,-1.4) and (3.31,-0.3) .. (0,0) .. controls (3.31,0.3) and (6.95,1.4) .. (10.93,3.29)   ;
\draw    (248.66,248.43) -- (308.33,248.43) ;
\draw [shift={(310.33,248.43)}, rotate = 180] [color={rgb, 255:red, 0; green, 0; blue, 0 }  ][line width=0.75]    (10.93,-3.29) .. controls (6.95,-1.4) and (3.31,-0.3) .. (0,0) .. controls (3.31,0.3) and (6.95,1.4) .. (10.93,3.29)   ;
\draw    (322.96,248.43) -- (382.62,248.43) ;
\draw [shift={(384.62,248.43)}, rotate = 180] [color={rgb, 255:red, 0; green, 0; blue, 0 }  ][line width=0.75]    (10.93,-3.29) .. controls (6.95,-1.4) and (3.31,-0.3) .. (0,0) .. controls (3.31,0.3) and (6.95,1.4) .. (10.93,3.29)   ;
\draw    (249.01,178.59) -- (308.67,178.59) ;
\draw [shift={(310.67,178.59)}, rotate = 180] [color={rgb, 255:red, 0; green, 0; blue, 0 }  ][line width=0.75]    (10.93,-3.29) .. controls (6.95,-1.4) and (3.31,-0.3) .. (0,0) .. controls (3.31,0.3) and (6.95,1.4) .. (10.93,3.29)   ;
\draw    (241.98,242.15) -- (241.98,186.92) ;
\draw [shift={(241.98,184.92)}, rotate = 90] [color={rgb, 255:red, 0; green, 0; blue, 0 }  ][line width=0.75]    (10.93,-3.29) .. controls (6.95,-1.4) and (3.31,-0.3) .. (0,0) .. controls (3.31,0.3) and (6.95,1.4) .. (10.93,3.29)   ;
\draw    (316.39,241.97) -- (316.39,186.74) ;
\draw [shift={(316.39,184.74)}, rotate = 90] [color={rgb, 255:red, 0; green, 0; blue, 0 }  ][line width=0.75]    (10.93,-3.29) .. controls (6.95,-1.4) and (3.31,-0.3) .. (0,0) .. controls (3.31,0.3) and (6.95,1.4) .. (10.93,3.29)   ;
\draw    (241.76,171.4) -- (241.76,116.17) ;
\draw [shift={(241.76,114.17)}, rotate = 90] [color={rgb, 255:red, 0; green, 0; blue, 0 }  ][line width=0.75]    (10.93,-3.29) .. controls (6.95,-1.4) and (3.31,-0.3) .. (0,0) .. controls (3.31,0.3) and (6.95,1.4) .. (10.93,3.29)   ;
\draw    (237.77,243.19) -- (171.09,115.85) ;
\draw [shift={(170.16,114.08)}, rotate = 62.36] [color={rgb, 255:red, 0; green, 0; blue, 0 }  ][line width=0.75]    (10.93,-3.29) .. controls (6.95,-1.4) and (3.31,-0.3) .. (0,0) .. controls (3.31,0.3) and (6.95,1.4) .. (10.93,3.29)   ;
\draw    (312.9,243.15) -- (246.22,115.81) ;
\draw [shift={(245.29,114.04)}, rotate = 62.36] [color={rgb, 255:red, 0; green, 0; blue, 0 }  ][line width=0.75]    (10.93,-3.29) .. controls (6.95,-1.4) and (3.31,-0.3) .. (0,0) .. controls (3.31,0.3) and (6.95,1.4) .. (10.93,3.29)   ;
\draw    (388.98,312.21) -- (322.3,184.86) ;
\draw [shift={(321.37,183.09)}, rotate = 62.36] [color={rgb, 255:red, 0; green, 0; blue, 0 }  ][line width=0.75]    (10.93,-3.29) .. controls (6.95,-1.4) and (3.31,-0.3) .. (0,0) .. controls (3.31,0.3) and (6.95,1.4) .. (10.93,3.29)   ;
\draw    (174.61,108.83) -- (234.27,108.83) ;
\draw [shift={(236.27,108.83)}, rotate = 180] [color={rgb, 255:red, 0; green, 0; blue, 0 }  ][line width=0.75]    (10.93,-3.29) .. controls (6.95,-1.4) and (3.31,-0.3) .. (0,0) .. controls (3.31,0.3) and (6.95,1.4) .. (10.93,3.29)   ;
\draw    (391.24,310.91) -- (391.24,255.68) ;
\draw [shift={(391.24,253.68)}, rotate = 90] [color={rgb, 255:red, 0; green, 0; blue, 0 }  ][line width=0.75]    (10.93,-3.29) .. controls (6.95,-1.4) and (3.31,-0.3) .. (0,0) .. controls (3.31,0.3) and (6.95,1.4) .. (10.93,3.29)   ;

\draw (225.58,255) node [anchor=north west][inner sep=0.75pt]  [font=\small]  {$x$};
\draw (160,89.38) node [anchor=north west][inner sep=0.75pt]  [font=\small]  {$\mubar x$};
\draw (250.14,97.15) node [anchor=north west][inner sep=0.75pt]  [font=\small]  {$\overline{\partial }^{2} x$};
\draw (219.71,168) node [anchor=north west][inner sep=0.75pt]  [font=\small]  {$\overline{\partial } x$};
\draw (308.63,252.69) node [anchor=north west][inner sep=0.75pt]  [font=\small]  {$\partial x$};
\draw (399,315) node [anchor=north west][inner sep=0.75pt]  [font=\small]  {$\mu x$};
\draw (380,220) node [anchor=north west][inner sep=0.75pt]  [font=\small]  {$\partial^{2} x$};
\draw (318.48,160) node [anchor=north west][inner sep=0.75pt]  [font=\small]  {$\overline{\partial} \partial x=-\partial\overline{\partial}x$};

\draw (430,95) node [anchor=north west][inner sep=0.75pt]  [font=\small]  {$\overline{\mu}( \partial x)  = \left( -\frac{1}{2} -\alpha \right)\overline{\partial}^{2} x$};
\draw (430,125) node [anchor=north west][inner sep=0.75pt]  [font=\small]  {$\partial (\overline{\mu } x) = \left( -\frac{1}{2} +\alpha \right)\overline{\partial}^{2} x$};
\draw (430,155) node [anchor=north west][inner sep=0.75pt]  [font=\small]  {$\overline{\mu }( \mu x) = \beta \overline{\partial}\partial x$};
\draw (430,185) node [anchor=north west][inner sep=0.75pt]  [font=\small]  {$\mu (\overline{\mu } x)  =-\beta \overline{\partial}\partial  x$};
\draw (430,215) node [anchor=north west][inner sep=0.75pt]  [font=\small]  {$\mu (\overline{\partial } x) =\left( -\frac{1}{2} -\gamma \right)  \partial^{2} x$};
\draw (430,245) node [anchor=north west][inner sep=0.75pt]  [font=\small]  {$\overline{\partial}(\mu x) =\left( -\frac{1}{2} +\gamma \right) \partial^{2} x$};
\end{tikzpicture}

\end{center}
Here each node represents a basis vector and the arrows correspond to the operators $\mubar,\delbar,\del,\mu$, which are drawn in the way indicating their bidegrees. Arrows corresponding to vanishing operators, such as $\mubar(\mubar x)=0$, are omitted from the diagram.

This family of representations by construction descends to a family of faithful representations of the 6-dimensional quotient $\mathfrak{g}/([\del,\delbar])$ of $\mathfrak{g}$. The Lie subalgebra of $\mathfrak{g}/([\del,\delbar])$ generated by $\del,\delbar$ is isomorphic to the homotopy Lie algebra of $\mathbb{C}\mathbb{P}^1\times\mathbb{C}\mathbb{P}^1$ tensored with $\mathbb{C}$. To understand representations of $\mathfrak{g}$, it should be necessary as a first step to analyze representations of this special quotient $\mathfrak{g}/([\del,\delbar])$. As simple as $\mathfrak{g}/([\del,\delbar])$ may appear, it is nilpotent and general representation theory for nilpotent graded Lie algebras is not well-developed. However, for this specific Lie algebra, classification of representations may be possible, at least in low dimensions.

\bibliographystyle{alpha}
\bibliography{ref}
\end{document}